\newtheorem{theorem}{Theorem}[section]
\newtheorem{lemma}[theorem]{Lemma}
\newtheorem{proposition}[theorem]{Proposition}
\newtheorem{corollary}[theorem]{Corollary}
\theoremstyle{definition}
\newtheorem{example}[theorem]{Example}
\theoremstyle{remark}
\newtheorem{remark}[theorem]{Remark}
\numberwithin{equation}{section}
\newcommand{\fg}{\mathfrak{g}}
\newcommand{\fk}{\mathfrak{k}}
\newcommand{\fs}{\mathfrak{s}}
\begin{document}
\pagestyle{plain}

\title{Reductions of minimal Lagrangian submanifolds with symmetries}

\author{Toru Kajigaya}
\address{National Institute of Advanced Industrial Science and Technology (AIST), MathAM-OIL, 
Sendai 980-8577, Japan
}



\email{kajigaya.tr@aist.go.jp}


\subjclass[2010]{Primary 53D12; Secondary  53C42, 53D20}
\date{\today}
\keywords{Minimal Lagrangian submanifolds, K\"ahler reductions}

\begin{abstract}
 Let $M$ be a Fano manifold equipped with a K\"ahler form $\omega\in 2\pi c_1(M)$ and $K$ a connected compact Lie group acting on $M$ as holomorphic isometries. In this paper, we show the minimality of a $K$-invariant Lagrangian submanifold $L$ in $M$ with respect to a globally conformal K\"ahler metric is equivalent to the minimality of the reduced Lagrangian submanifold $L_0=L/K$ in a K\"ahler quotient $M_0$ with respect to the Hsiang-Lawson metric. Furthermore, we give some examples of K\"ahler reductions by using a circle action obtained from a cohomogenenity one action on a K\"ahler-Einstein manifold of positive Ricci curvature. Applying these results, we obtain several examples of minimal Lagrangian submanifolds via reductions.
\end{abstract}

\maketitle

\section{Introduction}\label{Sec1}
Minimal submanifolds in a Riemannian manifold are classical objects in a submanifold geometry, and investigated by many authors. In particular, the group symmetry of the ambient Riemannian manifold is a useful notion, and many examples of minimal submanifolds with symmetries have been constructed.  Let $(\overline{M}, \overline{g})$ be a  Riemannian manifold with Riemannian metric $\overline{g}$ and $K$ a connected Lie group acting on $\overline{M}$ as isometries. It was first proved by Hsiang-Lawson \cite{HL} that  a $K$-invariant submanifold $N$ of $\overline{M}$ is minimal if and only if $N$ is a stationary point of the volume functional under any compactly supported $K$-equivariant infinitesimal deformation of $N$, and the minimality of $N$ in $\overline{M}$ is equivalent to the minimality of the reduced space $N^*/K$ in the orbit space $\overline{M}^*/K$ equipped with an appropriate metric which we call the {\it Hsiang-Lawson metric} (see \cite{HL} or Subsection \ref{subsec2.1} in the present paper for the definition), where $\overline{M}^*$ (resp. $N^*$) is the set of principal $K$-orbits in $\overline{M}$ (resp. $N$).  This fundamental method can be applied to several situations and produces many examples of minimal submanifolds (see \cite{HL}).

On the other hand, Lagrangian submanifolds in a symplectic manifold $(M,\omega)$ play an important role in symplectic geometry. In the symplectic contexts, there is a well-known reduction procedure so called the Marsden-Weinstein-Meyer symplectic reduction. Suppose a Lie group $K$ acts on $M$ in a Hamiltonian way, namely, the action preserves the symplectic form $\omega$ and admits a moment map $\mu: M\rightarrow \fk^*$, where $\fk$ is the Lie algebra of $K$. If $K$ acts on a Lagrangian submanifold $L$, then $L$ is contained in $\mu^{-1}(c)$ for some $c\in \mathfrak{z}(\fk^*)$, where $\mathfrak{z}(\fk^*)$ is the center of $\fk^*$. Furthermore, if $c$ is a regular value of $\mu$ and $K$ acts on $\mu^{-1}(c)$ freely,  then the  Marsden-Weinstein-Meyer symplectic reduction yields another symplectic manifold $(M_c=\mu^{-1}(c)/K, \omega_c)$ and $L$ is reduced to a Lagrangian submanifold $L_c=L/K$ in $M_c$. In this situation,  it is natural to ask whether a certain property of $L$ is related to a property of the reduced Lagrangian submanifold $L_c$.

 If we equip $M$ with a Riemannian metric $g$, then we define notions of minimality of a Lagrangian submanifold as follows:  A Lagrangian submanifold $L$ in $M$ is called {\it minimal (resp. Hamiltonian minimal) with respect to $g$} if $L$ is a stationary point of the volume functional measured by $g$ under any infinitesimal deformation of $L$ (resp. any Hamiltonian deformation  of $L$ in the sense of \cite{Oh}). 
Minimal Lagrangian submanifolds are of particular interests in several contexts, e.g., the Lagrangian mean curvature flows and the Hamiltonian volume minimizing problem  (see \cite{KK}, \cite{LP}, \cite{Oh}, \cite{SmoWang} and references therein). 

When $M$ is a K\"ahler manifold, the symplectic quotient space $M_c$ inherits a natural K\"ahler structure, and we call this reduction procedure the K\"ahler reduction (see Section \ref{Sec2}). 
In \cite{Dong}, Dong applied  Hsiang-Lawson's method to Hamiltonian minimal Lagrangian submanifolds in a K\"ahler manifold $M$ and proved that a $K$-invariant Lagrangian submanifold $L$ in $M$ is Hamiltonian minimal with respect to the K\"ahler metric $g$ if and only if so is $L_c$ in the K\"ahler quotient $M_c$ with respect to the Hsiang-Lawson metric of $g$. By using this reduction method, Dong constructed infinitely many Hamiltonian minimal Lagrangian submanifolds with large symmetries in $\mathbb{C}P^n$ and $\mathbb{C}^n$.
On the other hand, Legendre-Rollin studied the Hamiltonian stability and rigidity of the reduced Lagrangian submanifold when $M_c$ is a compact toric K\"ahler manifold, and gave an interesting application for Lagrangian tori by using the symplectic reduction (see Section 3 in \cite{LR}). 

In the present paper, we first improve Dong's result and give a Hsiang-Lawson type theorem for minimal Lagrangian submanifolds in a K\"ahler manifold (Theorem \ref{thm2.11}). As a consequence, we show that if $M$ is a Fano manifold equipped with a K\"ahler form $\omega\in 2\pi c_1(M)$, then a $K$-invariant Lagrangian submanifold in $M$ is minimal with respect to a globally conformal K\"ahler metric $\tilde{g}$ if and only if the reduced Lagrangian submanifold in the K\"ahler quotient space is minimal with respect to the Hsiang-Lawson metric of $\tilde{g}$.   Note that the globally conformal K\"ahler metric $\tilde{g}$ is defined by the K\"ahler metric on $M$ and the Ricci form of $M$ (see Subsection \ref{subsec2.3}).  In particular, our result can be applied to any closed K\"ahler-Einstein manifold of positive Ricci curvature as ambient manifold $M$. If this is the case, then the globally conformal K\"ahler metric $\tilde{g}$ is taken as the K\"ahler-Einsiten metric of $M$. However, we remark that the Hsiang-Lawson metric is not necessarily a K\"ahler metric on the quotient space. 
See Section \ref{Sec2} for more details.

Next, we apply Theorem \ref{thm2.11} to construct new examples of minimal Lagrangian submanifolds. 
By Theorem \ref{thm2.11}, under the assumption of $M$, a $K$-invariant minimal Lagrangian submanifold in $M$ yields a minimal Lagrangian submanifold in the K\"ahler quotient space (with respect to the appropriate metrics) via the reduction, and vise-versa. However, it is not  easy to see what the K\"ahler quotient is explicitly in general. When $M$ is a Fano manifold, Futaki proved that a K\"ahler quotient space is a Fano manifold again, however, the quotient structure is not necessarily K\"ahler-Einstein (See \cite{Futaki} or Section \ref{Sec2} in the present paper). 

On the other hand, when $M$ is the complex Euclidean space $\mathbb{C}^n$,  the Hopf fibration $\pi: S^{2n-1}\rightarrow \mathbb{C}P^{n-1}$ gives a typical example of the K\"ahler reduction via the standard $S^1$-action on $\mathbb{C}^n$.  In this case, the  reduction still remains large symmetries, and the quotient space (i.e., $\mathbb{C}P^{n-1}$) becomes a compact homogeneous K\"ahler-Einstein manifold.  Moreover, we note that  some extrinsic properties of a $S^1$-invariant Lagrangian submanifold in $\mathbb{C}^n$ are closely related to properties of the reduced Lagrangian submanifold in $\mathbb{C}P^{n-1}$ (cf. \cite {AO}, \cite{Dong}, \cite{LR} and \cite{Oh}).   In Section \ref{Sec3}, we give a special case of the K\"ahler reduction, as a generalization of the Hopf fibration, in a compact K\"ahler-Einstein manifold $M$ of positive Ricci curvature by using a circle action obtained from a cohomogeneity one action on $M$ (Theorem \ref{thm3.7}).  The resulting K\"ahler quotient space is always a compact homogeneous K\"ahler-Einstein manifold of positive Ricci curvature and the Hsiang-Lawson metric coincides with the K\"ahler-Einstein metric. Thus, the geometric structure of the quotient space may be well-understood.  Furthermore, we give several examples of such K\"ahler reductions when $M$ is a Hermitian symmetric space of compact type (see Subsection \ref{subsec3.3}). 

Finally, applying these results, we give several examples of minimal Lagrangian submanifolds in some homogeneous K\"ahler-Einstein manifolds via reductions.

\section{K\"ahler reductions and Lagrangian submanifolds}\label{Sec2}
Let $(M, \omega, J)$ be a complex $n$-dimensional K\"ahler manifold, where $\omega$ is the K\"ahler form and $J$ is the complex structure, and $K$ a real $l$-dimensional connected compact Lie group. We define the compatible Riemannian metric $g$ by $g(\cdot, \cdot):=\omega(\cdot, J\cdot)$. Suppose $K$ acts on $M$ as holomorphic isometries and the action is Hamiltonian. We fix a moment map of the $K$-action by $\mu: M\rightarrow \fk^*$, where $\fk$ is the Lie algebra of $K$. We refer to \cite{Audin} for general facts of the moment map.

Let $L$ be a $K$-invariant connected Lagrangian submanifold in $M$ and $\phi: L\rightarrow M$ the embedding of $L$, i.e., $\phi$ satisfies $\phi^*\omega=0$ and ${\rm dim}_{\mathbb{R}}L=n$. Then, there exists $c\in \mathfrak{z(k^*)}:=\{c\in\fk^*; {\rm Ad}^*(k)c=c\ \forall k\in K\}$ so that $\phi(L)$ is contained in the level set $\mu^{-1}(c)$ of $\mu$ (cf. Lemma 2.3 in \cite{Dong}). Because of this reason, we always assume $c\in\mathfrak{z(k^*)}$ throughout this section. We remark that we can take another moment map $\mu':=\mu-c$ so that $\mu'^{-1}(0)=\mu^{-1}(c)$ since $c\in\mathfrak{z(k^*)}$. However, we shall use the fixed $\mu$ because $c$ depends on each $K$-invariant Lagrangian submanifold.  

Since $c\in\mathfrak{z(k^*)}$, $K$ acts on $\mu^{-1}(c)$. Throughout this section, we assume $c$ is a regular value of the moment map and the action $K\curvearrowright \mu^{-1}(c)$ is free.  Then, $\mu^{-1}(c)$  and the quotient space $M_c:=\mu^{-1}(c)/K$ are smooth manifolds and $M_c$ inherits a natural K\"ahler structure in the sense of Theorem 7.2.3 in \cite{Futaki2}. We denote the inclusion and the natural projection of $\mu^{-1}(c)$ by $\iota: \mu^{-1}(c)\rightarrow M$ and $\pi: \mu^{-1}(c)\rightarrow M_c$, respectively. 

We recall the structure of the K\"ahler quotient space according to Section 7 in \cite{Futaki2}. For $X\in \fk$, we denote the fundamental vector field at $p\in M$ by $\tilde{X}_p:=\frac{d}{dt}|_{t=0}{\rm exp}(tv)\cdot p$, and set $\fk_p:=\{\tilde{X}_p; X\in \fk\}=T_p(K\cdot p)$.  For any $p\in \mu^{-1}(c)$, we have
\begin{align}\label{eq1}
T_p\mu^{-1}(c)=E_p\oplus \fk_p\quad {\rm and}\quad  T_p^{\perp}\mu^{-1}(c)=J\fk_p,
\end{align}
where  $E_p$ is the orthogonal complement of $\fk_p$ in $T_p\mu^{-1}(c)$.  We define vector bundles over $\mu^{-1}(c)$ by
\begin{align*}
E:=\bigcup_{p\in \mu^{-1}(c)}E_p\quad {\rm and}\quad F:=\bigcup_{p\in \mu^{-1}(c)}(\fk_p\oplus J\fk_p).
\end{align*}
  By definition of $\pi$, $\pi_*: E_p\rightarrow T_{\pi(p)}M_c$ is an isomorphism.
We define an almost complex structure $J_c$ and the Riemannian metric $g_c$ on $M_c$ by $(\pi_*)|_E\circ J=J_c\circ (\pi_*)|_E$ and $g|_{E_p}=\pi^*g_c$, respectively. Then, $J_c$ is integrable and $g_c$ defines a K\"ahler metric on $M_c$ (\cite{Futaki2}). Moreover, the K\"ahler form $\omega_c(\cdot, \cdot):=g_c(J_c\cdot, \cdot)$ satisfies $\pi^*\omega_c=\iota^*\omega$.

Notice that the projection $\pi: \mu^{-1}(c)\rightarrow M$ is a Riemannian submersion. Let $\nabla$ and $\nabla_c$ be the Levi-Civita connections of $(M,g)$ and $(M_c,g_c)$, respectively. Then, we have
\begin{align}\label{eq2}
(\nabla_c)_{Z_1}Z_2=\pi_*\{p_1(\nabla_{Z'_1}Z'_2)\}
\end{align}
for $Z_1, Z_2\in \Gamma(TM_c)$, where $Z'_i$ is the unique $K$-invariant section of $E$ so that $\pi_*(Z'_i)=Z_i$ and $p_1: T_pM\rightarrow E_p$ is the orthogonal projection.

Fix a basis $\{\tilde{v}_i\}_{i=1}^l$ of $\fk_p$, where $v_i\in \fk$ for $i=1,\ldots, l$, and define
\begin{align*}
\nu:=\tilde{v}_1^*\wedge\ldots\wedge \tilde{v}_l^*,
\end{align*} 
where $\{\tilde{v}_i^*\}_{i=1}^l$ is the dual basis of $\{\tilde{v}_i\}_{i=1}^l$, i.e., $\tilde{v}_i^*:=g(\tilde{v}_i,\cdot)$. It is easy to verify that $\nu$ and its norm $|\nu|_g$ are $K$-invariant.  Thus, we obtain a well-defined function $|\check{\nu}|$ on $M_c$ so that  $|\nu|_g=|\check\nu|\circ \pi$.
Moreover, the volume of the $K$-orbit $\mathcal{O}_p=K\cdot p$ through $p\in \mu^{-1}(c)$ with respect to the induced metric from $g$ is given by  
\begin{align}\label{eq3}
vol_g(\mathcal{O}_p)=\int_{\mathcal{O}_p} \sqrt {{\rm det}(g(\tilde{v}_i, \tilde{v}_j))}\nu=|\nu|_g(p)\int_{\mathcal{O}_p} \nu.
\end{align}
Since $K$ acts on $\mu^{-1}(c)$ freely, any $K$-orbit contained in $\mu^{-1}(c)$ is a principal orbit in $\mu^{-1}(c)$. In particular, the orbits are diffeomorphic to each other, and $\int_{\mathcal{O}_p} \nu$ is independent of the choice of $p\in \mu^{-1}(c)$.

According to Hsiang-Lawson \cite{HL}, we define the {\it Hsiang-Lawson metric ${g}_{HL}$ of $g$} on the quotient space $M_c$ by
\begin{align*}
{g}_{HL}(x):=vol_g(\mathcal{O}_p)^{2/(n-l)}g_c(x)
\end{align*}
for $x\in M_c$ and $p\in \pi^{-1}(x)$, where $n-l$ is the cohomogeneity of the $K$-invariant Lagrangian embedding $\phi: L\rightarrow M$, i.e., $n-l={\rm dim}_{\mathbb{R}}L-{\dim}_{\mathbb{R}}\mathcal{O}_p$. Notice that $n-l={\rm dim}_{\mathbb{C}}M_c$ in our setting.  Also, we define a globally conformal symplectic form ${\omega}_{HL}$ by ${\omega}_{HL}(\cdot, \cdot):={g}_{HL}(J_c\cdot, \cdot)$.

For the Lagrangian embedding $\phi: L\rightarrow M$, we have the following isomorphisms: 
\begin{align*}
\begin{aligned}
T_pL\ \tilde{\rightarrow}\ T_p^{\perp}L\ \tilde{\rightarrow}\ T_p^*L, \quad V\mapsto JV\mapsto \alpha_V:=\phi^*(i_V\omega)
\end{aligned}
\end{align*} 
for any $p\in L$, where $T_p^{\perp}L$ is the normal space of $T_pL$ in $T_pM$ with respect to $g$ and $i$ denotes the inner product. Moreover, we have a decomposition
\begin{align*}
T_pL={E}^l_p\oplus \mathfrak{k}_p,
\end{align*}  
 where ${E}^l_p$ is the orthogonal complement of $\mathfrak{k}_p$ in $T_pL$.  Because $L$ is Lagrangian and $E_p$ is a complex subspace of $T_pM$, we have $E_p={E}^l_p\oplus J{E}^l_p$ for $p\in L$.   
 
 Since $K$ acts on $L\subset \mu^{-1}(c)$ freely, we obtain a smooth manifold ${L}_c:=\pi(L)$ and the embedding ${\phi}_c: {L}_c \rightarrow M_c$ of $L_c$.  
Then, ${\phi}_c$ is a Lagrangian embedding into $M_c$ since $\pi^*\omega_c=\iota^*\omega$ and  ${\rm dim}_{\mathbb{R}}{L}_c={\rm dim}_{\mathbb{C}}M_c$.   We call $\phi_c$ the {\it reduced} Lagrangian embedding of $\phi$.
 By definition, we have $\pi\circ \phi=\phi_c\circ \pi$ on $L$ and $\pi_*|_{{E}^l_p}: {E}^l_p\ {\rightarrow} \ T_{\pi(p)}L_c$ and $\pi_*|_{J{E}^l_p}: J{E}^l_p\rightarrow J_cT_{\pi(p)}L_c=T^{\perp}_{\pi(p)}L_c$ are isomorphisms.

\subsection{The Ricci form}\label{subsec2.1}
In this subsection, we derive a formula of the Ricci form of the K\"ahler quotient space. This was first computed by Futaki \cite{Futaki}, and  we shall slightly improve his result. We refer to \cite{Futaki} for more details.

Let $M$ and $K$ be as described above. Suppose $c\in \mathfrak{z(k^*)}$ is a regular value of the moment map $\mu$, and $K$ acts freely on $\mu^{-1}(c)$.
Since $E$ and $F$ are $J$-invariant, we have decompositions $E\otimes \mathbb{C}=E^{1,0}\oplus E^{0,1}$ and $F\otimes \mathbb{C}=F^{1,0}\oplus F^{0,1}$ into $\pm{\sqrt{-1}}$-eigendecompositions of $J$. Then, we see $\iota^*T^{1,0}M=E^{1,0}\oplus F^{1,0}$, where $\iota: \mu^{-1}(c)\rightarrow M$. We take a $K$-invariant unitary basis $\{\eta_1,\ldots,\eta_{n-l}\}$ of $E^{1,0}$ with respect to the hermitian metric $h:=g+\sqrt{-1}\omega$. For the real basis $\{\tilde{v}_k\}_{k=1}^{l}$ of $\fk_p$, we set $\xi_k:=(\tilde{v}_k-\sqrt{-1}J\tilde{v}_k)/2\in \Gamma(T^{1,0}M)$ for $k=1,\ldots, l$.  Obviously, $\xi_k$ is $K$-invariant for any $k$. For the immersion $\iota: \mu^{-1}(c)\rightarrow M$,   we define sections of $\Lambda^{l}F^{1,0}$ and $\wedge^n T^{1,0}M$ by 
\begin{align*}
\begin{aligned}
\xi:= \xi_1\wedge\ldots \wedge \xi_l\quad {\rm and}\quad \Omega:={\eta}_1\wedge\ldots \wedge{\eta}_{n-l}\wedge\xi_1\wedge\ldots\xi_l,
\end{aligned}
\end{align*}
respectively.  By definition, these sections are $K$-invariant.

\begin{lemma}\label{lem2.1}
We have $||\xi||_h=|\nu|_g$.
\end{lemma}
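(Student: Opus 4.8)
The plan is to compute both sides as Gram determinants and show that they coincide, the whole point being that the orbit directions are isotropic. Since $\xi = \xi_1\wedge\cdots\wedge\xi_l$ is a decomposable element of $\Lambda^l F^{1,0}$, its Hermitian norm is the square root of the Gram determinant of the generating vectors,
\[
\|\xi\|_h^2 = \det\bigl(h(\xi_i,\xi_j)\bigr)_{i,j=1}^l ,
\]
where $h(\xi_i,\xi_j)$ denotes the Hermitian inner product induced by $h$ on $T^{1,0}M$. On the real side, because the musical isomorphism $X\mapsto g(X,\cdot)$ is an isometry between $T_pM$ and $T_p^*M$, we have $g^*(\tilde v_i^*,\tilde v_j^*)=g(\tilde v_i,\tilde v_j)$, and the same determinant formula for the induced metric on $\Lambda^l T^*M$ gives
\[
|\nu|_g^2 = \det\bigl(g(\tilde v_i,\tilde v_j)\bigr)_{i,j=1}^l .
\]
Thus it suffices to identify these two Gram matrices.

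First I would expand $h(\xi_i,\xi_j)$ into real data. Substituting $\xi_k = (\tilde v_k - \sqrt{-1}J\tilde v_k)/2$ and using the $J$-invariance of $g$ together with $\omega(\cdot,\cdot)=g(J\cdot,\cdot)$, a routine $\mathbb{C}$-bilinear expansion reduces $h(\xi_i,\xi_j)$ to a combination of the two real quantities $g(\tilde v_i,\tilde v_j)$ and $g(\tilde v_i,J\tilde v_j)$ only. The crucial geometric input is that the $\omega$-type term vanishes: from \eqref{eq1} we read off $T_p^\perp\mu^{-1}(c)=J\fk_p$, while $\fk_p\subseteq T_p\mu^{-1}(c)$, so $\fk_p\perp J\fk_p$ and hence $g(\tilde v_i,J\tilde v_j)=0$ for all $i,j$. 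Equivalently, the orbit $\mathcal{O}_p$ is isotropic, $\omega(\tilde v_i,\tilde v_j)=0$, which also follows directly from the moment-map identity $d\mu^{v_i}=i_{\tilde v_i}\omega$ restricted to the level set. Feeding this back into the expansion collapses the off-diagonal imaginary contributions and leaves $h(\xi_i,\xi_j)$ equal to a fixed real multiple of $g(\tilde v_i,\tilde v_j)$.

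Finally I would verify that this multiplicative constant is exactly $1$. This is the one point requiring care: the factor $1/2$ in the definition of $\xi_k$ is calibrated against the normalization of $h=g+\sqrt{-1}\omega$ precisely so that a $g$-orthonormal pair $\{e,Je\}$ yields a unit holomorphic vector $(e-\sqrt{-1}Je)/2$, the very normalization under which $\{\eta_1,\dots,\eta_{n-l}\}$ is a unitary basis of $E^{1,0}$. With the constant equal to $1$ we obtain $h(\xi_i,\xi_j)=g(\tilde v_i,\tilde v_j)$, so the two Gram determinants agree and $\|\xi\|_h=|\nu|_g$. I expect the only genuine subtlety to be bookkeeping this normalization constant correctly; the conceptual heart of the proof is the isotropy of the $K$-orbits inside $\mu^{-1}(c)$, which is exactly what strips the imaginary part off the Hermitian Gram matrix and turns a Hermitian determinant into the real one defining $|\nu|_g$.
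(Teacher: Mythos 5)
Your proof is correct and follows essentially the same route as the paper: both compute $\|\xi\|_h^2$ and $|\nu|_g^2$ as Gram determinants and use the isotropy of the $K$-orbits in $\mu^{-1}(c)$ to kill the $\omega$-part, giving $h(\xi_i,\xi_j)=g(\tilde v_i,\tilde v_j)$. You merely make explicit the normalization bookkeeping that the paper leaves implicit.
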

\begin{proof}
Since any $K$-orbit contained in a level set $\mu^{-1}(c)$ is isotropic, we have $h(\xi_i,\xi_j)=g(\tilde{v}_i, \tilde{v}_j)+\sqrt{-1}\omega(\tilde{v}_i, \tilde{v}_j)=g(\tilde{v}_i, \tilde{v}_j)$. Thus, $||\xi||^2_h={\rm det}_{\mathbb{C}}(h(\xi_i,\xi_j))_{ij}={\rm det}_{\mathbb{R}}(g(\tilde{v}_i,\tilde{v}_j))_{ij}=|\nu|_g^2$.
\end{proof}

Denote the Ricci forms of $M$ and $M_c$ by $\rho$ and $\rho_c$, respectively (Note that our definition of the Ricci form is different from the ones in \cite{Futaki}. The relationship between the Ricci form $\gamma$ used in \cite{Futaki} and $\rho$ is given by $\gamma=\frac{1}{2\pi}\rho$). Then, the equality (3.3), Lemma 3.4 in \cite{Futaki} and Lemma \ref{lem2.1} implies
\begin{align}\label{eq4}
\pi^*\rho_c=\iota^*\rho+\pi^*dd^c\log |\check \nu|-\sqrt{-1}d\theta_v,
\end{align}
where $dd^c=2\sqrt{-1}\partial\overline{\partial}$ and $\theta_v$ is a 1-form on $\mu^{-1}(c)$ defined by
\begin{align*}
\begin{cases}
     \iota^*\nabla_{\tilde{X}}\Omega=\theta_v(\tilde{X})\Omega,\ {\rm for}\ X\in \fk\quad {\rm and}\\
     \theta_v(Z)=0\ {\rm for}\ Z\in E_p.
    \end{cases}
\end{align*}

In order to compute $\theta_v$, we need the following lemma which holds for any K\"ahler manifold. A proof is similar to Lemma 4.4 in \cite{Futaki}. Thus, we omit the proof.

\begin{lemma}[cf. \cite{Futaki}]\label{lem2.2}
Let  $s$  be a section of $\wedge^n T^{1,0}M$. Then, for any $X\in \fk$, we have
\begin{align*}
\mathcal{L}_{\tilde{X}}s=\nabla_{\tilde{X}}s-\frac{\sqrt{-1}}{2}\Delta\mu^Xs,
\end{align*}
where $\mathcal{L}$ is the Lie derivative and $\mu^X:=\langle\mu, X \rangle$.
\end{lemma}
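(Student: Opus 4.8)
The plan is to reduce the statement to a pointwise trace computation via the standard algebraic relation between the Lie derivative and the Levi-Civita connection. For any vector field $V$ and any tensor field $T$, the operator $\mathcal{L}_V-\nabla_V$ is $C^\infty(M)$-linear in $T$ and acts as the derivation extension of the endomorphism $-A_V\in{\rm End}(TM)$ defined by $A_V(Y):=\nabla_Y V$; this follows from $\mathcal{L}_V Y=\nabla_V Y-\nabla_Y V$ together with the Leibniz rules for $\mathcal{L}_V$ and $\nabla_V$. Applying this with $V=\tilde{X}$ to the section $s$ of the line bundle $\wedge^n T^{1,0}M$, the problem becomes the identification of the scalar function by which this derivation multiplies $s$, which will be minus the complex trace of $A:=\nabla\tilde{X}$ restricted to $T^{1,0}M$.

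First I would check that $A$ preserves the type decomposition. Since $K$ acts by holomorphic isometries, $\tilde{X}$ is a Killing field with $\mathcal{L}_{\tilde{X}}J=0$. Expanding $(\mathcal{L}_{\tilde{X}}J)(Y)=[\tilde{X},JY]-J[\tilde{X},Y]$ and using $\nabla J=0$ and vanishing torsion, one gets $(\mathcal{L}_{\tilde{X}}J)(Y)=J(\nabla_Y\tilde{X})-\nabla_{JY}\tilde{X}=JA(Y)-A(JY)$, so $\mathcal{L}_{\tilde{X}}J=0$ is equivalent to $AJ=JA$. Hence $A$ is complex linear and preserves $T^{1,0}M$, and on the line bundle $\wedge^n T^{1,0}M$ the derivation extension of $-A$ acts as multiplication by $-{\rm tr}_{\mathbb{C}}(A|_{T^{1,0}M})$. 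Thus $(\mathcal{L}_{\tilde{X}}-\nabla_{\tilde{X}})s=-{\rm tr}_{\mathbb{C}}(A|_{T^{1,0}M})\,s$, and it remains to evaluate this complex trace.

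To compute it, I would pass from the complex trace to real traces: for a real endomorphism $A$ commuting with $J$ one has
\begin{align*}
{\rm tr}_{\mathbb{C}}(A|_{T^{1,0}M})=\tfrac{1}{2}{\rm tr}_{\mathbb{R}}(A)-\tfrac{\sqrt{-1}}{2}{\rm tr}_{\mathbb{R}}(JA),
\end{align*}
as one verifies on the test case $A=J$. The real part vanishes because $\tilde{X}$ is Killing, so ${\rm tr}_{\mathbb{R}}(A)={\rm div}\,\tilde{X}=0$. For the imaginary part, the moment map equation $d\mu^X=i_{\tilde{X}}\omega$ together with $\omega(\cdot,\cdot)=g(J\cdot,\cdot)$ gives ${\rm grad}\,\mu^X=J\tilde{X}$, whence, using $\nabla J=0$,
\begin{align*}
{\rm tr}_{\mathbb{R}}(JA)={\rm tr}_{\mathbb{R}}(\nabla(J\tilde{X}))={\rm div}(J\tilde{X})={\rm div}({\rm grad}\,\mu^X)=-\Delta\mu^X,
\end{align*}
in the convention $\Delta=-{\rm div}\circ{\rm grad}$ on functions. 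Substituting back yields ${\rm tr}_{\mathbb{C}}(A|_{T^{1,0}M})=\tfrac{\sqrt{-1}}{2}\Delta\mu^X$, and therefore $(\mathcal{L}_{\tilde{X}}-\nabla_{\tilde{X}})s=-\tfrac{\sqrt{-1}}{2}(\Delta\mu^X)\,s$, which is the claim.

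The Leibniz-rule identity and the trace bookkeeping are routine; the one point requiring genuine care is the consistent tracking of sign conventions — those in the moment map normalization $d\mu^X=\pm i_{\tilde{X}}\omega$, in the relation between $g$ and $\omega$, and in the Laplacian $\Delta=\pm\,{\rm div}\circ{\rm grad}$ — since each of these flips the sign of the final scalar. I would fix all of them at the outset so that the computation reproduces exactly the sign in the statement, namely the convention under which this lemma agrees with the parallel computation in Lemma 4.4 of \cite{Futaki}.
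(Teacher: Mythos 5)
Your proof is correct: the reduction of $\mathcal{L}_{\tilde{X}}-\nabla_{\tilde{X}}$ to the complex trace of $A=\nabla\tilde{X}$ on $\wedge^n T^{1,0}M$, the vanishing of the real part by the Killing condition, and the identification of the imaginary part with $-\frac{1}{2}\mathrm{div}(J\tilde{X})=\frac{1}{2}\Delta\mu^X$ all hold under the paper's conventions ($\nabla\mu^X=J\tilde{X}$ as in Remark \ref{rmk2.8}, and $\Delta=-\mathrm{div}\circ\mathrm{grad}$ as implicit in \eqref{eq5}). The paper omits its own proof and defers to Lemma 4.4 of \cite{Futaki}, which is exactly this divergence/trace computation, so your argument is essentially the intended one.
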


Define a 1-form ${\gamma}'_c$ on ${\mu}^{-1}(c)$ by
\begin{align}\label{eq5}
    \begin{cases}
     {\gamma}'_c(p)(\tilde{X}_p):=-\frac{1}{2}{\rm div}(J\tilde{X})_p=\frac{1}{2}\Delta \mu^X(p)\ {\rm for}\ X\in \fk\quad {\rm and}\\
      {\gamma}'_c(p)(Z):=0\ {\rm for}\ Z\in E_p.
    \end{cases}
\end{align}
Then,  Lemma \ref{lem2.2} shows $\iota^*(\mathcal{L}_{\tilde{X}}s)=\iota^*(\nabla_{\tilde{X}}s)-{\sqrt{-1}}\gamma'_c(\tilde{X})\iota^*s$.
Taking $s$ as the $K$-invariant section $\Omega$, we have
$\iota^*\nabla_{\tilde{X}}\Omega=\sqrt{-1}{\gamma}'_c(\tilde{X})\Omega$. Therefore, we obtain
\begin{align}\label{eq6}
\theta_v=\sqrt{-1}{\gamma}'_c.
\end{align}

\begin{proposition}\label{prop2.3}
We have
\begin{align}\label{eq7}
\pi^*\rho_c=\iota^*\rho+d{\gamma}'_c+\pi^*dd^c\log|\check{\nu}|. 
\end{align}
Here, the 2-form $d{\gamma}'_c$ satisfies
\begin{eqnarray*}
d{\gamma}'_c(Z,W)(p)=
\left\{
    \begin{array}{l}
      2{\gamma}'_c(p)(JB'(Z, JW)) \quad {\rm for}\ Z,W\in E_p\\
      0\quad {\rm for\  otherwise}
    \end{array}
    \right.
    ,
\end{eqnarray*}
where $B'$ is the second fundamental form of $\iota: \mu^{-1}(c)\rightarrow M$.
\end{proposition}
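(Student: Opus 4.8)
The first assertion of the proposition is immediate: substituting $\theta_v=\sqrt{-1}\,{\gamma}'_c$ from \eqref{eq6} into \eqref{eq4} and using $-\sqrt{-1}\cdot\sqrt{-1}=1$ converts the term $-\sqrt{-1}\,d\theta_v$ into $+d{\gamma}'_c$, which is exactly \eqref{eq7}. So the real content is the pointwise description of $d{\gamma}'_c$, and the plan is to evaluate it through the intrinsic formula
\[
d{\gamma}'_c(U,V)=U({\gamma}'_c(V))-V({\gamma}'_c(U))-{\gamma}'_c([U,V])
\]
on the three types of pairs dictated by the splitting $T_p\mu^{-1}(c)=E_p\oplus\fk_p$ of \eqref{eq1}. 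I would extend $Z,W\in E_p$ to $K$-invariant sections of $E$, so that ${\gamma}'_c$ annihilates them identically, and take fundamental vector fields $\tilde X,\tilde Y$ in the $\fk$-directions. Since $Z$ is $K$-invariant we have $[Z,\tilde X]=0$, and ${\gamma}'_c(Z)=0$ everywhere; this is what makes the two mixed terms and the two vertical terms collapse to a single expression in each case.

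The horizontal case $Z,W\in E_p$ is the heart of the matter, and I expect it to go through cleanly. Here the first two terms vanish because ${\gamma}'_c|_E=0$, so $d{\gamma}'_c(Z,W)=-{\gamma}'_c([Z,W])$ and only the $\fk$-component of the bracket survives. I would extract that component by pairing with $\tilde X\in\fk$: writing $[Z,W]=\nabla_Z W-\nabla_W Z$ and using $g(W,\tilde X)=g(Z,\tilde X)=0$ to move the connection onto $\tilde X$, the Killing property of $\tilde X$ (skew-symmetry of $\nabla\tilde X$) gives $g([Z,W],\tilde X)=-2g(W,\nabla_Z\tilde X)$. Next I invoke $\nabla J=0$ to replace $\nabla_Z\tilde X=-J\nabla_Z(J\tilde X)$; since $J\tilde X$ is normal to $\mu^{-1}(c)$ while $JW\in E$ is tangent, the second fundamental form enters as $g(W,\nabla_Z\tilde X)=g(JW,\nabla_Z(J\tilde X))=-g(B'(Z,JW),J\tilde X)$. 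As $B'(Z,JW)\in J\fk_p$, set $\tilde Y:=JB'(Z,JW)\in\fk_p$; then $g([Z,W],\tilde X)=-2g(\tilde Y,\tilde X)$ for all $\tilde X$, so the $\fk$-part of $[Z,W]$ is $-2\tilde Y$ and
\[
d{\gamma}'_c(Z,W)=-{\gamma}'_c([Z,W])=2{\gamma}'_c(\tilde Y)=2{\gamma}'_c(JB'(Z,JW)),
\]
which is precisely the asserted formula.

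The main obstacle is proving that $d{\gamma}'_c$ vanishes on the remaining pairs $(\tilde X,\tilde Y)$ and $(Z,\tilde X)$. Running the same bookkeeping, the vertical case reduces, via the equivariance of $\mu$ (which yields $\tilde X(\mu^Y)=\pm\mu^{[X,Y]}$ and $[\tilde X,\tilde Y]=-\widetilde{[X,Y]}$) and the fact that $\Delta$ commutes with the isometric $K$-action, to a multiple of $\Delta\mu^{[X,Y]}$ evaluated on $\mu^{-1}(c)$, while the mixed case collapses to $\tfrac12\,Z(\Delta\mu^X)$. Showing these vanish is the delicate step. For the vertical term I would combine the centrality of $c$, which forces $\mu^{[X,Y]}=\langle c,[X,Y]\rangle=0$ on $\mu^{-1}(c)$, with the Bochner-type identity for holomorphic Killing potentials $d(\Delta\mu^X)=2\,i_{J\tilde X}\rho$, so that $\Delta\mu^X$ is controlled through the Ricci form; for the mixed term the same identity gives $Z(\Delta\mu^X)=2\rho(\tilde X,Z)$, and one must check that $\rho$ pairs $\fk_p$ trivially with $E_p$ along $\mu^{-1}(c)$ — equivalently, in view of \eqref{eq7} and the basicness of $\pi^*dd^c\log|\check\nu|$, that $\iota^*\rho$ carries no mixed or vertical component. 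This compatibility of the Ricci form with the splitting $E\oplus F$ is exactly the quantitative input behind Futaki's computation leading to \eqref{eq4}, and is where the real work lies; once it is secured, both the vertical and the mixed parts of $d{\gamma}'_c$ vanish and the case analysis is complete.
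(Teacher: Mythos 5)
Your derivation of \eqref{eq7} from \eqref{eq4} and \eqref{eq6} is exactly the paper's, and your treatment of the horizontal case is correct and amounts to the same computation: the paper obtains $[Z,W]^{\top_{\fk_p}}=2(\nabla'_ZW)^{\top_{\fk_p}}=-2JB'(Z,JW)$ by citing (3.6) and (4.7) of Kobayashi's paper on submersions of CR manifolds, whereas you reprove that identity from scratch using the skew-symmetry of $\nabla\tilde{X}$ for the Killing field $\tilde{X}$ together with $\nabla J=0$ and the fact that $J\tilde{X}$ is normal while $JW$ is tangent; either way one lands on $d\gamma'_c(Z,W)=-\gamma'_c([Z,W]^{\top_{\fk_p}})=2\gamma'_c(JB'(Z,JW))$. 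This part of the proposal is complete and, if anything, more self-contained than the paper.

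The gap is in the remaining cases, and you have flagged it yourself. Your reductions are right: extending $Z$ to a $K$-invariant section of $E$ (so $[Z,\tilde{X}]=0$ and $\gamma'_c(Z)\equiv 0$) gives $d\gamma'_c(Z,\tilde{X})=\tfrac12 Z(\Delta\mu^X)=\rho(\tilde{X},Z)$ by \eqref{eq16}, and the vertical case reduces to a nonzero multiple of $\Delta\mu^{[X,Y]}$ along $\mu^{-1}(c)$, i.e.\ to $\rho(\tilde{X},\tilde{Y})$. But the two tools you propose do not close this: centrality of $c$ gives $\mu^{[X,Y]}=0$ on $\mu^{-1}(c)$ and says nothing about $\Delta\mu^{[X,Y]}$ there, and the identity $d(\Delta\mu^X)=2\,i_{\tilde{X}}\rho$ is precisely \eqref{eq16} again, so invoking it to show $\rho(\tilde{X},Z)=0$ merely restates the claim (the vanishing on non-horizontal pairs is \emph{equivalent} to $\iota^*\rho$ being horizontal, i.e.\ to $\Delta\mu^X$ being constant on $\mu^{-1}(c)$) rather than proving it. Your argument is therefore circular at exactly the point you call delicate. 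The vanishing is immediate when $\rho=C\omega$, since the orbit is isotropic and $J\fk_p\perp T_p\mu^{-1}(c)$, and the paper itself dispatches these cases with ``one can easily check'' and never uses them later (the applications in Subsection \ref{subsec2.3} go through \eqref{eq7} combined with the explicit formula \eqref{eq20}); still, as a proof of the proposition as stated, your proposal is incomplete here and you should either supply the horizontality of $\iota^*\rho$ or restrict the ``otherwise'' claim to the setting where it is needed.
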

\begin{proof}
\eqref{eq7} follows from \eqref{eq4} and \eqref{eq6}. If $Z\in E_p$, then ${\gamma}'_c(Z)=0$ by definition. Thus, for $Z,W\in E_p$, we see
\begin{align*}
d{\gamma'_c}(Z,W)=-{\gamma'_c}([Z,W])=-{\gamma'_c}([Z,W]^{\top_{\fk_p}}),
\end{align*}
where ${\top_{\fk_p}}$ means the orthogonal projection onto $\fk_p$.
Here, by (3.6) and (4.7) in \cite{Kobayashi}, we have 
\begin{align*}
[Z,W]^{\top_{\fk_p}}=2(\nabla'_ZW)^{\top_{\fk_p}}=:2C(Z,W)=-2JB'(Z,JW),
\end{align*}
where $\nabla'$ is the Levi-Civita connection on $T\mu^{-1}(c)$.
Therefore, we obtain $d{\gamma'_c}(Z,W)=2{\gamma}'_c(p)(JB'(Z, JW))$ for $Z,W\in E_p$. One can easily check that $d\gamma'_c(X,Y)=0$ for other pairs $X,Y$.
\end{proof}

\subsection{The mean curvature form}\label{subsec2.2}
Let $L$ be the Lagrangian submanifold in $M$ which is contained in $\mu^{-1}(c)$, and $K\cdot p$ the $K$-orbit through $p\in L$. We denote the mean curvature vectors of $L$ and $K\cdot p$ in $\mu^{-1}(c)$ with respect to $\iota^*g$ by $H'$ and $\hat{H}$, respectively.  Also, $H_c$ is denoted by the mean curvature vector of the reduced Lagrangian submanifold $L_c$ in $M_c$ with respect to the K\"ahler metric $g_c$. 

The following formula for $\hat H$ can be proven in a general setting (cf. \cite{Pacini}). However, we give a proof for the convenience of the reader. We denote the Levi-Civita connection of $T\mu^{-1}(c)$ by $\nabla'$. 

\begin{lemma} \label{lem2.4}
We have $\hat{H}_p=-\nabla'\log|\nu|(p)$. In particular, $\pi_*H'=H_c-(\nabla_c\log|\check{\nu}|)^{\perp_c}$, where $\perp_c$ denotes the orthogonal projection onto the normal space of $T_{\pi(p)}L_c$ in $T_{\pi(p)}M_c$.
\end{lemma}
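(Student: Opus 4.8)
The plan is to establish the formula for $\hat{H}_p$ by a direct computation using the definition of the mean curvature vector as a trace of the second fundamental form, and then deduce the relation between $\pi_* H'$ and $H_c$ by analyzing how the Riemannian submersion $\pi:\mu^{-1}(c)\to M_c$ interacts with the orbit directions.

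First I would compute $\hat{H}_p$, the mean curvature vector of the orbit $K\cdot p$ inside $\mu^{-1}(c)$. Using the $K$-invariant frame $\{\tilde{v}_i\}$ spanning $\fk_p$, the mean curvature is obtained by tracing the normal component of $\nabla'_{\tilde{v}_i}\tilde{v}_j$ against the inverse metric $(g^{ij})=(g(\tilde{v}_i,\tilde{v}_j))^{-1}$. The key observation is that the function $|\nu|_g$, whose square equals $\det(g(\tilde{v}_i,\tilde{v}_j))$ by the definition of $\nu$, encodes precisely the infinitesimal change of orbit volume. I would use the standard identity that for a Riemannian submersion (or more directly, for the family of orbits foliating $\mu^{-1}(c)$), the mean curvature of the leaves is the negative gradient of the logarithm of the leaf-volume density. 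Concretely, differentiating $\log\sqrt{\det(g(\tilde{v}_i,\tilde{v}_j))}=\log|\nu|_g$ in a direction $Z\in E_p$ normal to the orbit, and matching this against $\sum_{i,j} g^{ij}\,g(\nabla'_{\tilde{v}_i}\tilde{v}_j, Z)$, yields $g(\hat{H}_p, Z)=-Z(\log|\nu|_g)=-g(\nabla'\log|\nu|, Z)$. Since $\hat{H}_p$ lies in $E_p$ (the orbits are principal, and $\log|\nu|$ is $K$-invariant so its gradient is horizontal), this gives $\hat{H}_p=-\nabla'\log|\nu|(p)$.

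Next I would derive the second identity. The mean curvature vector $H'$ of $L$ in $\mu^{-1}(c)$ is the trace of the second fundamental form over a full orthonormal frame of $T_pL=E^l_p\oplus\fk_p$. Splitting this trace into the contribution from $E^l_p$ and from the orbit directions $\fk_p$, the orbit part contributes exactly $\hat{H}_p$, since along orbit directions $L$ and $K\cdot p$ share the same second fundamental form in $\mu^{-1}(c)$. For the $E^l_p$ part, I would push forward by $\pi_*$ and use \eqref{eq2}, which expresses $\nabla_c$ in terms of the horizontal projection of $\nabla$; this identifies the $E^l_p$-trace with $H_c$, up to the discrepancy between the Levi-Civita connection on $\mu^{-1}(c)$ and the ambient one. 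Applying $\pi_*$ to $\hat{H}_p=-\nabla'\log|\nu|$ and using $|\nu|_g=|\check\nu|\circ\pi$ together with the fact that $\pi_*$ is an isometry on horizontal vectors converts $\nabla'\log|\nu|$ into $\nabla_c\log|\check\nu|$; projecting onto the normal space of $L_c$ produces the stated term $(\nabla_c\log|\check\nu|)^{\perp_c}$.

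\textbf{The main obstacle} I anticipate is bookkeeping the connection discrepancies carefully: $L$ sits in $\mu^{-1}(c)$ which in turn sits in $M$, so there are two layers of second fundamental forms ($L\subset\mu^{-1}(c)$ governed by $\nabla'$, and $\mu^{-1}(c)\subset M$ governed by $B'$), and I must ensure the orbit-direction trace and the $E^l_p$-direction trace are split consistently and that the O'Neill-type correction terms from the submersion \eqref{eq2} either vanish by symmetry or assemble into exactly $-(\nabla_c\log|\check\nu|)^{\perp_c}$. In particular I expect the tangential components along $L_c$ to cancel, leaving only the normal projection; verifying this cancellation — that $\pi_*H'$ differs from $H_c$ by precisely a horizontal gradient projected to the normal bundle of $L_c$, with no tangential remainder — is the delicate step.
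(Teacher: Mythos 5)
Your argument follows the paper's proof in all essentials: the identity $\hat H=-\nabla'\log|\nu|$ is obtained there by exactly the computation you describe (differentiating $\log\det(g(\tilde v_i,\tilde v_j))$ along a $K$-invariant horizontal extension of $Z$ and swapping $\nabla'_{Z}\tilde v_j=\nabla'_{\tilde v_j}Z$ via $[Z,\tilde v_j]=0$, packaged in the paper as the connection form of the induced connection in the trivialization $\nu$), and the second identity is likewise proved by splitting the trace of $H'$ over $E^l_p\oplus\fk_p$ and pushing forward with \eqref{eq2}. The one imprecision is that the orbit directions contribute the $\perp'$-projection of $\hat H_p$ rather than $\hat H_p$ itself (the normal bundles of $L$ and of $K\cdot p$ in $\mu^{-1}(c)$ differ), but since you apply $\pi_*$ and then project onto $\perp_c$ at the end, this makes no difference to the conclusion.
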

\begin{proof}
We induce $\nabla'$ to the subbundle $\bigcup_{p\in \mu^{-1}(c)} \fk_p$ of $T\mu^{-1}(c)$. We denote the induced connection and its connection form in the trivialization $\nu$ by ${\nabla}^v$ and $\theta^v$, respectively, i.e., $\nabla^v\nu=\theta^v\otimes \nu$. Then, $\theta^v$ splits into $\theta^v=\theta^v_h+\theta^v_v$, where 1-forms $\theta^v_h$ and $\theta^v_v$ are defined by
\begin{align*}
\begin{aligned}
&\theta^v_h(Z)=\theta^v(Z),\quad \theta^v_h(V)=0,\quad \theta^v_v(Z)=0,\quad \theta^v_v(V)=\theta^v(V)
\end{aligned}
\end{align*}
for $Z\in {E}_p$ and $V\in \fk_p$. We shall calculate $\theta_h^v$ in different two ways. Take a $K$-invariant local basis $\{Z_1,\ldots, Z_{2n-2l}\}$ of $E$. Then we have $[Z_i, \tilde{v}_j]=0$ and $Z_i\perp \tilde{v}_j$ for any $i=1,\ldots, 2n-2l$ and $j=1,\ldots, l$. Therefore,  we see
\begin{align*}
\begin{aligned}
\nabla_{{Z_i}}^v \nu&=\sum_{j=1}^l\tilde{v}_1\wedge\ldots\wedge (\nabla_{Z_i}^v\tilde{v}_j)\wedge\ldots\wedge \tilde{v}_l\\
&=\sum_{j=1}^l\tilde{v}_1\wedge\ldots\wedge \Big{(}\sum_{k,m=1}^lg^{km}g(\nabla_{Z_i}'\tilde{v}_j, \tilde{v}_k)\tilde{v}_m\Big{)}\wedge\ldots\wedge \tilde{v}_l\\
&=\Big{(}\sum_{k,j=1}^lg^{kj}g(\nabla_{Z_i}'\tilde{v}_j, \tilde{v}_k)\Big{)}\nu=\Big{(}\sum_{j,k=1}^lg^{jk}g(\nabla_{\tilde{v}_j}'{Z_i}, \tilde{v}_k)\Big{)}\nu\\
&=-\Big{(}\sum_{j,k=1}^lg^{jk}g({Z_i}, \nabla_{\tilde{v}_j}'\tilde{v}_k)\Big{)}\nu=-g({Z_i}, \hat{H})\nu
\end{aligned}
\end{align*}
for any $Z_i$, where $g_{jk}:=g(\tilde{v}_j, \tilde{v}_k)$ and $(g^{jk})_{j,k=1,\ldots, l}$ denotes the inverse matrix of $(g_{jk})_{j,k=1,\ldots, l}$. 
On the other hand, since $\nabla^v$ is the metric connection, we see
\begin{align*}
\nabla_{Z_i}^v \nu= g\Big{(}\nabla_{Z_i}^v\nu, \frac{\nu}{|\nu|}\Big{)}\frac{\nu}{|\nu|}=(d\log |\nu|)({Z_i})\nu.
\end{align*}
Therefore, we obtain $\theta_h^v=-g(\hat{H},\cdot)|_{E}=(d\log|\nu|)|_{E}$. Moreover, we have $-g(\hat{H},\cdot)|_{\fk_p}=(d\log|\nu|)|_{\fk_p}=0$, and hence, $-g(\hat{H},\cdot)|_{T_p\mu^{-1}(c)}=d\log|\nu|$. This implies $\hat{H}_p=-\nabla'\log|\nu|(p)$.

Take an orthonormal basis $\{e_i\}_{i=1}^{n-l}$ of $L_c$. Then, we have an orthonormal basis $\{e_i'\}_{i=1}^{n-l}$ of ${E}_p$ satisfying $\pi_*e_i'=e_i$ for $i=1,\ldots, n-l$. Also, we take  an orthonormal basis $\{\nu_j\}_{j=1}^{l}$ of $\fk_p$. Then, we see 
\begin{align*}
\begin{aligned}
\pi_*H'&=\pi_*\Big{(}\sum_{i=1}^{n-l}(\nabla'_{e_i'}e_i')^{\perp'}+\sum_{j=1}^l(\nabla'_{\nu_j}\nu_j)^{\perp'}\Big{)}=\sum_{i=1}^{n-l}(\pi_*\nabla'_{e_i'}e_i')^{\perp_c}+(\pi_*\hat{H})^{\perp_c}\\
&=\sum_{i=1}^{n-l}\{(\nabla_{c})_{e_i}e_i\}^{\perp_c}-(\nabla_c\log|\check{\nu}|)^{\perp_c}=H_c-(\nabla_c\log|\check{\nu}|)^{\perp_c},
\end{aligned}
\end{align*}
where $\perp'$ denotes the orthogonal projection onto the normal space of $T_pL$ in $T_p\mu^{-1}(c)$.
\end{proof}

Denote the mean curvature vectors of $\phi: L\rightarrow M$ with respect to $g$  and $\phi_c:L_c\rightarrow M_c$ with respect to the Hsiang-Lawson metric $g_{HL}$ by $H$ and $H_{HL}$, respectively. Then, we define the mean curvature forms by  $\alpha_H:=\phi^*(i_H\omega)$, $\beta_{H_c}:=\phi^*_c(i_{H_c}\omega_c)$ and $\beta'_{H_{HL}}:=\phi_c^*(i_{H_{HL}}\omega_{HL})$.

\begin{proposition}\label{prop2.5}
Let $M$ be a K\"ahler manifold, $K$ a connected compact Lie group acting on $M$ as holomorphic isometries and a Hamiltonian way, and $\phi: L\rightarrow M$ a $K$-invariant Lagrangian embedding. Suppose $\phi(L)\subset {\mu}^{-1}(c)$ for some regular value $c\in\mathfrak{z(k^*)}$ and $K$ acts on $\mu^{-1}(c)$ freely. Then, we have
\begin{align}\label{eq8}
\pi^*{\beta}'_{{H}_{HL}}=\alpha_{H'},
\end{align}
where $H'$ is the mean curvature vector of $L$ in ${\mu}^{-1}(c)$, or equivalently,
\begin{align}\label{eq9}
\begin{aligned}
\pi^*\beta_{H_c}&=\alpha_H+\gamma_c+\pi^*\circ\phi_c^*(d^c\log|\check{\nu}|),
\end{aligned}
\end{align}
where $\gamma_c:=\phi^*{\gamma}'_c$ (see \eqref{eq5} for the definition of $\gamma_c'$).
 \end{proposition}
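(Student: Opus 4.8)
The plan is to prove \eqref{eq8} first and then to deduce \eqref{eq9} from it; since the two bridges between the equations are unconditional identities, this simultaneously proves that \eqref{eq8} and \eqref{eq9} are equivalent.

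\textbf{Step 1 (conformal comparison of mean curvatures).} Write $g_{HL}=e^{2u}g_c$, where by definition $e^{2u}=vol_g(\mathcal{O}_p)^{2/(n-l)}$. By \eqref{eq3} and the $K$-invariance of $|\nu|_g$, the function $u$ descends to $M_c$ and equals $\tfrac{1}{n-l}\log|\check{\nu}|$ up to an additive constant. I would then invoke the standard transformation law of the (unaveraged) mean curvature vector of the $(n-l)$-dimensional submanifold $L_c\subset M_c$ under the conformal change $g_{HL}=e^{2u}g_c$, namely $H_{HL}=e^{-2u}\bigl(H_c-(n-l)(\nabla_c u)^{\perp_c}\bigr)$. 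Since $(n-l)\nabla_c u=\nabla_c\log|\check{\nu}|$, Lemma \ref{lem2.4} identifies $H_c-(\nabla_c\log|\check{\nu}|)^{\perp_c}=\pi_*H'$, whence $H_{HL}=e^{-2u}\pi_*H'$.

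\textbf{Step 2 (proof of \eqref{eq8}).} From $\omega_{HL}=g_{HL}(J_c\cdot,\cdot)=e^{2u}\omega_c$ and Step 1 we get $i_{H_{HL}}\omega_{HL}=i_{\pi_*H'}\omega_c$ along $L_c$. Note $H'$ lies in $JE^l_p\subset E_p$, so it is $g$-normal to $L$ and $\alpha_{H'}$ is defined. Pulling back along $\pi$ and using $\pi^*\omega_c=\iota^*\omega$ together with $\pi\circ\phi=\phi_c\circ\pi$ on $L$, the identity $\omega_c(\pi_*H',\pi_*W)=\omega(H',W)$ for $W\in T\mu^{-1}(c)$ shows that $i_{\pi_*H'}\omega_c$ pulls back to $\iota^*(i_{H'}\omega)$; therefore $\pi^*\beta'_{H_{HL}}=\phi^*(i_{H'}\omega)=\alpha_{H'}$, which is \eqref{eq8}.

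\textbf{Step 3 (passage to \eqref{eq9}).} I would establish two unconditional identities. (A) By Lemma \ref{lem2.4}, $H_c-\pi_*H'=(\nabla_c\log|\check{\nu}|)^{\perp_c}$, so $\beta_{H_c}-\beta'_{H_{HL}}=\phi_c^*\bigl(i_{(\nabla_c\log|\check{\nu}|)^{\perp_c}}\omega_c\bigr)$; since $L_c$ is Lagrangian, the tangential part of $\nabla_c\log|\check{\nu}|$ is annihilated by $\phi_c^*(i_{\,\cdot\,}\omega_c)$, and the K\"ahler identity $i_{\nabla_c f}\omega_c=d^c f$ gives $\beta_{H_c}-\beta'_{H_{HL}}=\phi_c^*(d^c\log|\check{\nu}|)$. (B) Because $L\subset\mu^{-1}(c)\subset M$, the mean curvatures differ by the trace over $T_pL$ of the second fundamental form $B'$ of $\mu^{-1}(c)$ in $M$: $H=H'+\mathrm{tr}_{T_pL}B'$. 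A direct computation using that $B'$ takes values in $J\fk_p$ gives $\omega(B'(A,B),\tilde{X})=\mathrm{Hess}\,\mu^X(A,B)$ for $A,B\in T_p\mu^{-1}(c)$ and $X\in\fk$, while $\omega(\mathrm{tr}_{T_pL}B',Z)=0$ for $Z\in E^l_p$; combined with \eqref{eq5} this yields $\alpha_{H'}-\alpha_H=\gamma_c$. Substituting (A) and (B) into \eqref{eq8} produces \eqref{eq9}.

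\textbf{Main obstacle.} The crux is the trace computation in (B): I must show $\mathrm{tr}_{T_pL}\mathrm{Hess}\,\mu^X=-\gamma'_c(\tilde{X})$, i.e. half of the moment-map Laplacian. The key point is that $\tilde{X}$ is a \emph{holomorphic Killing} field, which forces $\mathrm{Hess}\,\mu^X$ to be $J$-invariant: $\mathrm{Hess}\,\mu^X(J\cdot,J\cdot)=\mathrm{Hess}\,\mu^X$. Since $T_pM=E^l_p\oplus JE^l_p\oplus\fk_p\oplus J\fk_p$ while $T_pL=E^l_p\oplus\fk_p$ contains exactly one member of each conjugate pair, $J$-invariance halves the trace, $\mathrm{tr}_{T_pL}\mathrm{Hess}\,\mu^X=\tfrac12\,\mathrm{tr}_{T_pM}\mathrm{Hess}\,\mu^X$, which equals $-\gamma'_c(\tilde{X})$ by \eqref{eq5}. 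Throughout, the sign conventions for $d^c$, the moment map, and the Laplacian must be tracked carefully so as to land on the precise signs of \eqref{eq9}.
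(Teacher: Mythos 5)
Your proposal is correct and follows essentially the same route as the paper: the conformal transformation law together with Lemma \ref{lem2.4} yields \eqref{eq8}, and the two bridging identities $\beta'_{H_{HL}}=\beta_{H_c}-\phi_c^*(d^c\log|\check{\nu}|)$ and $\alpha_{H'}=\alpha_H+\gamma_c$ (the paper's \eqref{eq10} and \eqref{eq11}) convert it into \eqref{eq9}. The only difference is cosmetic and lies in your step (B): the paper proves \eqref{eq11} by evaluating $\alpha_H(\tilde{X})=\tfrac{1}{2}\operatorname{div}J\tilde{X}$ directly via integration by parts over an orthonormal frame of $L$ (noting $\alpha_{H'}|_{\fk_p}=0$ and $\alpha_H|_{E^l_p}=\alpha_{H'}$), whereas you route the same computation through $H-H'=\operatorname{tr}_{T_pL}B'$ and $\omega(B'(\cdot,\cdot),\tilde{X})=\operatorname{Hess}\mu^X(\cdot,\cdot)$; both arguments rest on the identical key fact that $J$-invariance of $\operatorname{Hess}\mu^X$ (from $\tilde{X}$ being holomorphic Killing) makes the trace over the Lagrangian equal to half the full Laplacian.
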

 
 \begin{proof}
 Under the conformal change ${g}_{HL}=e^{2f}g_c$ for $f\in C^{\infty}(M_c)$, we have 
${H}_{HL}=e^{-2f}\{H_c-(n-l)(\nabla_cf)^{\perp_c}\}$,
where $n-l={\rm dim}_{\mathbb{R}}L_c$. Putting $e^{2f}=(vol_g(\mathcal{O}_p))^{2/(n-l)}$,  we see $f=\frac{1}{n-l}\log |\check{\nu}|+const.$ by \eqref{eq3}. Thus, by Lemma \ref{lem2.4}, we have
\begin{align*}
\begin{aligned}
\pi^*{\beta}'_{{H}_{HL}}&=\pi^*\circ\phi^*_c\{i_{{H}_{HL}}{\omega}_{HL}\}=\phi^*\circ \pi^*\{i_{\{H_c-(\nabla_c \log |\check{\nu}|)^{\perp_c}\}}\omega_c\}\\
&=\phi^*\{\omega_c(\pi_*H', \pi_*\cdot)\}=\phi^*\{i_{H'}\pi^*\omega_c\}=\phi^*\{i_{H'}\iota^*\omega\}=\alpha_{H'}.
\end{aligned}
\end{align*}
This proves \eqref{eq8}. Moreover, we see
\begin{align}\label{eq10}
{\beta}'_{{H}_{HL}}=\beta_{H_c}-\phi^*_c(d^c\log|\check{\nu}|).
\end{align}

Finally, we shall show 
\begin{align}\label{eq11}
\alpha_{H'}=\alpha_H+\gamma_c.
\end{align}
Take a local orthonormal frame $\{e_1,\ldots, e_n\}$ of $L$. Since $L$ is Lagrangian, we see
\begin{align*}
\begin{aligned}
\alpha_H(\tilde{X})&=-\sum_{i=1}^n g(\nabla_{e_i}e_i, J\tilde{X})=-\sum_{i=1}^n\{\nabla_{e_i}g(e_i, J\tilde{X})-g(e_i, \nabla_{e_i}J\tilde{X})\}\\
&=\frac{1}{2}{\rm div}J\tilde{X}=-\gamma_{c}(\tilde{X})
\end{aligned}
\end{align*}
for any $X\in \fk$ (A similar calculation is found in \cite{BG}), namely, $\alpha_H|_{\fk_p}=-\gamma_c$. On the other hand, one can easily verify that $\alpha_H|_{{E}^l_p}=\alpha_{H'}$. Thus, we have $\alpha_H=\alpha_{H'}-\gamma_c$.  Substituting \eqref{eq10} and \eqref{eq11} to \eqref{eq8}, we obtain \eqref{eq9}.
 \end{proof}

\eqref{eq8} shows that $H'=0$ if and only if $H_{HL}=0$ since $\pi$ is surjective. This is a special case of the classical fact due to Hsiang-Lawson \cite{HL}. However, our approach is different from theirs.

\begin{remark}\label{rmk2.6}
{\rm For a Lagrangian submanifold in a K\"ahler manifold, we have Dazord's formula: $d\alpha_H=\phi^*\rho$. By taking the exterior derivative of \eqref{eq9}, we obtain  $\pi^*d\beta_{H_c}=d\alpha_H+d\gamma_c+\phi^*\circ\pi^*dd^c\log|\check{\nu}|$.  This coincides with the pull-pack of the formula \eqref{eq7}.}
\end{remark}

\subsection{Minimal Lagrangian submanifolds}\label{subsec2.3}
In this subsection, we suppose furthermore  the Ricci form $\rho$ of the K\"ahler manifold $(M,\omega, J)$ satisfies
\begin{align}\label{eq12}
\rho=C\omega+ndd^cf
\end{align}
for non-zero constant $C$ and $f\in C^{\infty}(M)$, where $n={\rm dim}_{\mathbb{C}}M$. For example, any Fano manifold, i.e., a closed complex manifold with positive first Chern class $c_1(M)$ endowed with a K\"ahler form $\omega'$ so that $\omega'\in 2\pi c_1(M)$ satisfies $\rho=\omega'+ndd^c f$ for a real function $f\in C^{\infty}(M)$. Note that one can rescale the K\"ahler form so that $\omega=C\omega'$ for any positive constant $C$. Then, $\omega$ satisfies the relation \eqref{eq12}.  We remark that a similar condition for the Ricci curvature has been considered in \cite{Behrndt} and \cite{SmoWang}.

If $M$ satisfies \eqref{eq12}, it is somewhat reasonable to consider a conformal change of the metric. Namely, we define a {\it canonical} conformal change of $g$ by $\tilde{g}:=e^{2f}g$.   Also, we define $\tilde{\omega}:=e^{2f}\omega$. Then, $(\tilde{g}, \tilde{\omega}, J)$ defines a {\it globally conformal K\"ahler} structure on $M$. By definition, $\phi^*\omega=0$ if and only if $\phi^*\tilde{\omega}=0$ for an immersion $\phi$ into $M$. Thus, the notion of {Lagrangian} submanifolds is conformal invariant.

Since $K$ acts on $M$ as holomorphic isometries, $\rho$ and $\omega$ are $K$-invariant, and hence, $f$ is a $K$-invariant function. Thus, we obtain a well-defined function $\check{f}\in C^{\infty}(M_c)$ so that  $f=\pi^*\check{f}$. Then, we define a globally conformal K\"ahler metric on the quotient space $M_c$ by $\tilde{g}_c:=e^{2\check{f}}g_c$.  It is obvious that $\pi^*\tilde{g}_c=\iota^*\tilde{g}$. Moreover, we define the Hsiang-Lawson metric $\tilde{g}_{HL}$ on $M_c$ of $\tilde{g}$ by
\begin{align*}
\tilde{g}_{HL}(x):=vol_{\tilde{g}}(\mathcal{O}_p)^{2/(n-l)}\tilde{g}_c(x),
\end{align*}
for $x\in M_c$ and $p\in \pi^{-1}(x)$, where $vol_{\tilde{g}}(\mathcal{O}_p)$ is the volume of $\mathcal{O}_p$ with respect to the globally conformal K\"ahler metric $\tilde{g}$. Since $vol_{\tilde{g}}(\mathcal{O}_p)=e^{lf(p)}vol_{{g}}(\mathcal{O}_p)$, where $l={\rm dim}_{\mathbb{R}}\mathcal{O}_p$, we see
\begin{align}\label{eq13}
\begin{aligned}
\tilde{g}_{HL}(x)&
=e^{2f_c(x)}g_c,\ {\rm where}\\
 f_c(x)&:=\log vol_{{g}}(\mathcal{O}_p)^{1/(n-l)}+\frac{n}{n-l}\check{f}(x).
\end{aligned}
\end{align}

First, we mention the moment map of an action of holomorphic isometries.  The following proposition can be observed by the result in \cite{Futaki} (Our formulation is inspired by \cite{Podesta}).

\begin{proposition}\label{prop2.7}
Let $M$ be a K\"ahler manifold satisfying $\rho=C\omega+ndd^c f$ with $C\neq 0$ and $K$ a connected compact Lie group. Assume $K$ acts on $M$ preserving $(\omega, g, J)$.  Then, the action is Hamiltonian, and a moment map $\tilde{\mu}: M\rightarrow \fk^*$ is given by
\begin{align}\label{eq14}
\langle \tilde{\mu}(p), X\rangle=\frac{1}{C}\Big{\{}-\frac{1}{2}{\rm div}J\tilde{X}_p+nd^cf_p(\tilde{X}_p)\Big{\}},
\end{align}
for $X\in \fk$, where ${\rm div}$ is the divergence operator on $M$ with respect to $g$. We call $\tilde{\mu}$ the {\rm canonical moment map} for the $K$-action.
\end{proposition}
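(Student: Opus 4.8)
The plan is to verify directly that the function defined by the right-hand side of \eqref{eq14},
\[
\mu^X:=\frac{1}{C}\Big(-\tfrac12\operatorname{div}(J\tilde X)+n\,d^cf(\tilde X)\Big),
\]
is a Hamiltonian potential for the fundamental vector field $\tilde X$, i.e. $d\mu^X=i_{\tilde X}\omega$, and that $X\mapsto\mu^X$ assembles into an equivariant map $\tilde\mu\colon M\to\fk^*$. Since $K$ is connected and preserves $\omega$, we have $\mathcal L_{\tilde X}\omega=0$, so Cartan's formula gives $d(i_{\tilde X}\omega)=\mathcal L_{\tilde X}\omega-i_{\tilde X}d\omega=0$; thus $i_{\tilde X}\omega$ is automatically closed, and the real content is its exactness together with the identification of a potential. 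I would extract this from the defining relation \eqref{eq12} by contracting with $\tilde X$ and solving for $i_{\tilde X}\omega$.

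The contraction $i_{\tilde X}\rho=C\,i_{\tilde X}\omega+n\,i_{\tilde X}dd^cf$ splits the work into two pieces. For the conformal term, since $\tilde X$ is a holomorphic Killing field it commutes with $d^c$, and since $\omega$ and $\rho$ are $K$-invariant so is $f$, whence $\mathcal L_{\tilde X}f=\tilde X f=0$ and $\mathcal L_{\tilde X}(d^cf)=d^c(\mathcal L_{\tilde X}f)=0$. Cartan's formula then gives
\[
i_{\tilde X}\,dd^cf=\mathcal L_{\tilde X}(d^cf)-d\big(i_{\tilde X}d^cf\big)=-\,d\big(d^cf(\tilde X)\big),
\]
which is exact. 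The second and crucial piece is to show that the contraction of the Ricci form is likewise exact, with $i_{\tilde X}\rho=-\tfrac12\,d(\operatorname{div}(J\tilde X))$. Granting this, substituting both identities and solving yields
\[
i_{\tilde X}\omega=\frac1C\big(i_{\tilde X}\rho-n\,i_{\tilde X}dd^cf\big)=d\Big[\tfrac1C\big(-\tfrac12\operatorname{div}(J\tilde X)+n\,d^cf(\tilde X)\big)\Big]=d\mu^X,
\]
which is precisely \eqref{eq14}; this simultaneously proves the action is Hamiltonian and identifies the canonical moment map.

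The Ricci identity is the main obstacle and the only nontrivial analytic input; I would prove it as a general fact about a holomorphic Killing field on a K\"ahler manifold, not using \eqref{eq12}. The ingredients are: because $J$ is parallel, the convention $\rho(\cdot,\cdot)=\operatorname{Ric}(J\cdot,\cdot)$ gives $i_{\tilde X}\rho=\operatorname{Ric}(J\tilde X,\cdot)=\operatorname{Ric}(\alpha)$, where $\alpha:=(J\tilde X)^\flat=i_{\tilde X}\omega$; because $\tilde X$ is Killing, $\nabla^*\nabla\tilde X=\operatorname{Ric}(\tilde X)$, and $J$-parallelism with $[\operatorname{Ric},J]=0$ propagates this to $W:=J\tilde X$, giving $\nabla^*\nabla\alpha=\operatorname{Ric}(\alpha)$. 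Applying the Weitzenb\"ock formula $\Delta_{\mathrm{Hodge}}\alpha=\nabla^*\nabla\alpha+\operatorname{Ric}(\alpha)$ to the \emph{closed} form $\alpha$, the term $d^*d\alpha$ drops and one gets $dd^*\alpha=2\operatorname{Ric}(\alpha)$; since $d^*\alpha=-\operatorname{div}(J\tilde X)$, this is exactly $\operatorname{Ric}(\alpha)=-\tfrac12\,d\operatorname{div}(J\tilde X)$. Equivalently, the same potential can be read off Lemma \ref{lem2.2} applied to a nonvanishing local section of $\wedge^n T^{1,0}M$, whose logarithmic covariant derivative encodes $\rho$; this is the route recorded in \eqref{eq5}, where $\gamma'_c(\tilde X)=-\tfrac12\operatorname{div}(J\tilde X)=\tfrac12\Delta\mu^X$ already carries the identification.

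It remains to check that $\tilde\mu$ is a genuine moment map. Linearity of $X\mapsto\mu^X(p)$ is immediate, since $\tilde X_p$ depends $\mathbb R$-linearly on $X$ and both $\operatorname{div}(J\,\cdot)$ and $d^cf(\cdot)$ are linear in the vector field, so $\tilde\mu(p)\in\fk^*$ is well defined; note also that $\operatorname{div}(J\tilde X)$ and $d^cf(\tilde X)$ are globally defined functions, so $\mu^X$ is global and there is no monodromy obstruction. Finally, equivariance $\tilde\mu(k\cdot p)=\operatorname{Ad}^*(k)\tilde\mu(p)$ follows, since $K$ is connected, from its infinitesimal form, which is a consequence of the $K$-invariance of $g,J,\omega,f$ and the naturality of $\operatorname{div}$ and $d^c$ under the isometric holomorphic action, together with $\widetilde{[X,Y]}=[\tilde X,\tilde Y]$. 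This identifies $\tilde\mu$ as the canonical moment map of \eqref{eq14}.
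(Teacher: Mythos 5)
Your proposal is correct, and its overall skeleton coincides with the paper's: both verify $d\tilde{\mu}^X=i_{\tilde{X}}\omega$ by contracting the relation $\rho=C\omega+ndd^cf$ with $\tilde{X}$ and exhibiting explicit potentials for the two resulting exact $1$-forms, and both dispose of equivariance by invariance of all the data. The difference lies in how the two sub-identities are established. For the Ricci term, the paper simply cites Proposition 1.1 of \cite{Podesta} for $Y\bigl(-\tfrac12{\rm div}J\tilde{X}\bigr)=\rho(\tilde{X},Y)$, whereas you give a self-contained Bochner--Weitzenb\"ock derivation: $\alpha=(J\tilde{X})^{\flat}=i_{\tilde{X}}\omega$ is closed, the Killing--Bochner identity $\nabla^*\nabla\tilde{X}={\rm Ric}(\tilde{X})$ propagates through the parallel $J$ (using $[{\rm Ric},J]=0$ on a K\"ahler manifold) to give $\nabla^*\nabla\alpha={\rm Ric}(\alpha)$, and the Weitzenb\"ock formula then yields $dd^*\alpha=2\,{\rm Ric}(\alpha)$, i.e.\ $i_{\tilde{X}}\rho=-\tfrac12 d\,{\rm div}(J\tilde{X})$. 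For the conformal term, the paper computes $Y\{d^cf(\tilde{X})\}=-dd^cf(\tilde{X},Y)$ by a direct Hessian manipulation using \eqref{eq15}, while you obtain $i_{\tilde{X}}dd^cf=-d\bigl(d^cf(\tilde{X})\bigr)$ in one line from Cartan's formula and $\mathcal{L}_{\tilde{X}}(d^cf)=d^c(\mathcal{L}_{\tilde{X}}f)=0$. Your route buys independence from the external reference and a cleaner, coordinate-free treatment of the $dd^cf$ piece; the paper's route keeps the proof short by outsourcing the only nontrivial analytic input. Both are valid, and your sign conventions ($d^*\alpha=-{\rm div}(J\tilde{X})$, $\rho(\tilde{X},\cdot)={\rm Ric}(J\tilde{X},\cdot)$) are consistent with the paper's.
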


\begin{proof}
Since $K$ acts on $M$ as holomorphic isometries, it is easy to verify that  $\tilde{\mu}$ is $K$-equivariant, namely, $\tilde{\mu}(kp)={\rm Ad}^*(k^{-1})\tilde{\mu}(p)$ for any $k\in K$. Moreover, the fundamental vector field $\tilde{X}$ for $X\in \fk$ is infinitesimal automorphic, namely, $\mathcal{L}_{\tilde{X}}J=0$, or equivalently, $[\tilde{X}, JY]=J[\tilde{X}, Y]$ for any $Y\in \Gamma(TM)$. This implies 
\begin{align}\label{eq15}
\nabla_{JY}\tilde{X}=J\nabla_{Y}\tilde{X}
\end{align}
since $\nabla J=0$.

By Proposition 1.1 in \cite{Podesta}, we have
\begin{align}\label{eq16}
Y\Big{(}-\frac{1}{2}{\rm div}J\tilde{X}\Big{)}=\rho(\tilde{X}, Y)
\end{align}
for any $Y\in T_pM$ (Note that the signs of of $\omega$ and $\rho$ in \cite{Podesta} are different from ours).  On the other hand, we have
\begin{align}\label{eq17}
\begin{aligned}
Y\{d^cf(\tilde{X})\}&=-Y\{df(J\tilde{X})\}=-Y\{g(\nabla f, J\tilde{X})\}\\
&=-{\rm Hess}_f(Y, J\tilde{X})-g(\nabla f, \nabla_YJ\tilde{X}).
\end{aligned}
\end{align}
Since $f$ is a $K$-invariant function, we have $\tilde{X}f=0$.   By using this fact and \eqref{eq15}, we compute
\begin{align*}
\begin{aligned}
g(\nabla f, \nabla_YJ\tilde{X})&=g(\nabla f, J\nabla_Y\tilde{X})=g(\nabla f, \nabla_{JY}\tilde{X})=JY(\tilde{X}f)-g(\nabla_{JY}\nabla f, \tilde{X})\\
&=-{\rm Hess}_f(JY, \tilde{X})=-{\rm Hess}_f(\tilde{X}, JY)
\end{aligned}
\end{align*}
Thus, a straight forward calculation shows that \eqref{eq17} becomes
\begin{align}\label{eq18}
Y\{d^cf(\tilde{X})\}=-{\rm Hess}_f(Y, J\tilde{X})+{\rm Hess}_f(\tilde{X},JY)=-dd^cf(\tilde{X}, Y).
\end{align}
Therefore, by \eqref{eq16}, \eqref{eq18} and the assumption of $\rho$, we obtain
\begin{align*}
\langle Y\tilde\mu_p, X\rangle=\frac{1}{C}\{\rho(\tilde{X},Y)-ndd^cf(\tilde{X},Y)\}=\omega(\tilde{X}, Y).\end{align*}
This proves $d\tilde\mu^X=i_{\tilde{X}}\omega$ for any $X\in \fk$. Thus, $\tilde{\mu}$ is a moment map.
\end{proof}

\begin{remark}\label{rmk2.8}
{\rm If we define a weighted Laplacian on $M$ by $\Delta_fu:=\Delta u-2ng(du, df)$ for $u\in C^{\infty}(L)$, then we see from \eqref{eq16} that $\Delta_f\tilde{\mu}^X=2C\tilde{\mu}^X$ for any $X\in \fk$ since $J\tilde{X}=\nabla \tilde\mu^X$, namely, $\tilde{\mu}^X$ is an eigenfuntion of $\Delta_f$. In particular, if $M$ is closed, then the canonical moment map is characterized by $\int_{M}\tilde{\mu}^X\tilde{\omega}^n=0$. See \cite{Futaki} for more details of the canonical moment map when $M$ is a Fano manifold. }
\end{remark}

Replacing $\mu$  by  the canonical moment map $\tilde{\mu}$, we have from \eqref{eq5}
\begin{align}\label{eq19}
    \begin{cases}
     {\gamma}'_c(\tilde{X}_p)=Cc(X)-nd^cf_p(\tilde{X}_p)\ {\rm for}\ X\in \fk\quad {\rm and}\\
      {\gamma}'_c(p)(Z)=0\ {\rm for}\ Z\in E_p.
    \end{cases}
\end{align}
 for any $X\in \fk$ and $p\in \tilde{\mu}^{-1}(c)$.  If $c=0$, then \eqref{eq19}  implies
 \begin{align}\label{eq20}
{\gamma}'_0=-n\{\iota^*d^cf-\pi^*d^c\check{f}\}.
 \end{align}
Thus, the Ricci form $\rho_0$ of the K\"ahler quotient space $M_0=\tilde{\mu}^{-1}(0)/K$ satisfies
\begin{align*}
\begin{aligned}
\pi^*\rho_0&=\iota^*\rho-n\{\iota^*dd^cf-\pi^*dd^c\check{f}\}+\pi^*dd^c\log|\check{\nu}|\\
&=\pi^*\{C\omega_0+dd^c(\log|\check\nu|+n\check{f})\}
\end{aligned}
\end{align*}
by \eqref{eq7} and \eqref{eq20}.  Because $\pi$ is surjective, this shows that 
 \begin{align}\label{eq21}
\begin{aligned}
\rho_0&=C\omega_0+dd^c(\log|\check{\nu}|+n\check{f})=C\omega_0+(n-l)dd^cf_0,
\end{aligned}
\end{align}
where $f_0$ is given by \eqref{eq13}. Namely, we have the following which slightly generalizes the result of Futaki \cite{Futaki}:

\begin{proposition} \label{prop2.9}
Let $M$ be a K\"ahler manifold satisfying $\rho=C\omega+ndd^cf$ with $C\neq 0$ and $K\curvearrowright M$ an action of  holomorphic isometries.  If $0$ is a regular value of the canonical moment map $\tilde{\mu}$ of the $K$-action and $K$ acts on $\tilde{\mu}^{-1}(0)$ freely, then the Ricci form of the K\"ahler quotient space $(M_0=\tilde{\mu}^{-1}(0)/K, \omega_0, J_0)$ satisfies \eqref{eq21}.

In particular, the Hsiang-Lawson metric $\tilde{g}_{HL}$ on $M_0$ of $\tilde{g}$ coincides with the canonical conformal change of ${g}_0$, i.e., $\tilde{g}_{HL}=e^{2f_0}g_0$.
\end{proposition}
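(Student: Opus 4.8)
The plan is to reconstruct the Ricci identity \eqref{eq21} from the formula \eqref{eq7} specialised to the canonical moment map at the central value $c=0$, and then to match the two descriptions of the reduced conformal factor to obtain the final assertion. The one structural ingredient that makes everything work is the use of the \emph{canonical} moment map $\tilde{\mu}$: by Proposition \ref{prop2.7} the associated form $\gamma'_c$ carries the extra term $nd^cf$ (see \eqref{eq19}), and this is exactly what is needed to cancel the Ricci potential of $M$ after reduction.

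First I would specialise \eqref{eq7} to the canonical moment map at $c=0$. Setting $c=0$ in \eqref{eq19} gives \eqref{eq20}, namely $\gamma'_0=-n\{\iota^*d^cf-\pi^*d^c\check f\}$, so that $d\gamma'_0=-n\{\iota^*dd^cf-\pi^*dd^c\check f\}$. Substituting this together with the hypothesis \eqref{eq12}, in the form $\iota^*\rho=C\,\iota^*\omega+n\,\iota^*dd^cf$ with $\iota^*\omega=\pi^*\omega_0$, into \eqref{eq7}, the two occurrences of $\iota^*dd^cf$ cancel and the right-hand side collapses to $\pi^*\{C\omega_0+dd^c(\log|\check\nu|+n\check f)\}$. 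Since $\pi$ is a surjective submersion, $\pi^*$ is injective on forms, so this descends to the first equality of \eqref{eq21}.

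The passage to the second form $\rho_0=C\omega_0+(n-l)dd^cf_0$ is the only genuinely arithmetic step, and it is where the two constructions meet. I would invoke the orbit-volume formula \eqref{eq3}, $vol_g(\mathcal O_p)=|\nu|_g(p)\int_{\mathcal O_p}\nu$, together with $|\nu|_g=|\check\nu|\circ\pi$ and the fact (noted below \eqref{eq3}) that $\int_{\mathcal O_p}\nu$ is independent of $p$. This yields $\log vol_g(\mathcal O_p)=\log|\check\nu|+\mathrm{const}$, whence from the definition \eqref{eq13} of $f_0$ one gets $(n-l)f_0=\log|\check\nu|+n\check f+\mathrm{const}$. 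Applying $dd^c$ annihilates the constant, giving $(n-l)dd^cf_0=dd^c(\log|\check\nu|+n\check f)$ and hence the second equality of \eqref{eq21}.

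Finally, the ``in particular'' statement is immediate once \eqref{eq21} is in hand. Equation \eqref{eq21} exhibits the quotient $M_0$, of complex dimension $n-l$, as satisfying the structural relation \eqref{eq12} with the same constant $C$ and defining function $f_0$; by the definition of the canonical conformal change in Subsection \ref{subsec2.3}, the canonical conformal change of $g_0$ is therefore $e^{2f_0}g_0$. On the other hand \eqref{eq13} already identifies the Hsiang--Lawson metric $\tilde g_{HL}$ of $\tilde g$ as $e^{2f_0}g_0$ for this very same $f_0$, so the two coincide. I do not expect a real obstacle here; the only point demanding care is the tracking of the additive constant in the previous step, so that the conformal factor $f_0$ of \eqref{eq13} is literally the Ricci potential appearing in \eqref{eq21}, and not merely equal to it up to a constant that would rescale the metric.
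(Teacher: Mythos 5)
Your proposal is correct and follows essentially the same route as the paper: specialize \eqref{eq7} to the canonical moment map at $c=0$ using \eqref{eq20}, cancel the $\iota^*dd^cf$ terms against the hypothesis \eqref{eq12}, descend by injectivity of $\pi^*$, and then identify $(n-l)f_0$ with $\log|\check\nu|+n\check f$ up to an additive constant via \eqref{eq3} and \eqref{eq13}. Your explicit tracking of that additive constant (harmless under $dd^c$, and absorbed into the definition \eqref{eq13} of $f_0$ for the metric statement) is a point the paper passes over silently but is handled correctly here.
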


\begin{remark}\label{rmk2.10}
{\rm (i) When $M$ is K\"ahler-Einstein, then so is $(M_0, \omega_0, J_0)$ if and only if $|\nu|$ is constant on $\tilde{\mu}^{-1}(0)$, or equivalently,  every $K$-orbits contained in $\tilde{\mu}^{-1}(0)$ has the same volume (cf. Corollary 3 in \cite{Futaki}). 

(ii) If $c\neq 0$, then  $d\gamma'_c$ depends on the second fundamental form of $\iota: \tilde{\mu}^{-1}(c)\rightarrow M$. See Proposition \ref{prop2.3}}.
\end{remark}

Let $\phi: L\rightarrow M$ be a Lagrangian immersion. Denote the mean curvature vector of $\phi$ with respect to $\tilde{g}=e^{2f}g$  by $\tilde{H}$, and set $\tilde{\alpha}_{\tilde{H}}:=\phi^*(i_{\tilde{H}}\tilde\omega)$. By definition, we have
\begin{align}\label{eq22}
\begin{aligned}
\tilde{\alpha}_{\tilde{H}}&=\alpha_H-n\phi^*d^cf, 
\end{aligned}
\end{align}
We remark that $\tilde{\alpha}_{\tilde{H}}$ is a  closed 1-form. In fact, Dazord's formula $d\alpha_H=\phi^*\rho$ implies
\begin{align*}
d\tilde{\alpha}_{\tilde{H}}=\phi^*\rho-n\phi^*dd^cf=\phi^*(C\omega)=0.
\end{align*}
See Remark \ref{rmk2.13} in below on this point.  Also, we denote the mean curvature vector of  $\phi_0:L_0\rightarrow M_0$ with respect to $\tilde{g}_{HL}$ by $\tilde{H}_{HL}$, and we set 
\begin{align*}
\tilde{\beta}_{\tilde{H}_{HL}}:=\phi_0^*(i_{\tilde{H}_{HL}}\tilde{\omega}_{HL}),
\end{align*}
 where $\tilde{\omega}_{HL}(\cdot, \cdot):=\tilde{g}_{HL}(J_0\cdot, \cdot)$. A similar argument shows that $\tilde{\beta}_{\tilde{H}_{HL}}$ is also a closed form.

Now, we state the first main result of the present paper:

\begin{theorem}\label{thm2.11}
Let $(M, \omega, J)$ be a complex $n$-dimensional K\"ahler manifold, $K$ a connected compact Lie group acting on $M$ as holomorphic isometries and  $\phi: L\rightarrow M$ a $K$-invariant Lagrangian embedding of a manifold $L$. Suppose the Ricci form of $M$ satisfies $\rho=C\omega+ndd^c f$ for some constant $C\neq 0$ and $f\in C^{\infty}(M)$.  Moreover, we define a globally conformal K\"ahler metric $\tilde{g}$ on $M$ by $\tilde{g}:=e^{2f}g$. Then, we have the following:
\begin{description}
\item[\rm (a)] If $\phi$ is minimal with respect to $\tilde{g}$, i.e., $\tilde{\alpha}_{\tilde{H}}=0$, or more generally $\tilde{\alpha}_{\tilde{H}}$ is exact, then $\phi(L)$ is contained in the $0$-level set of the canonical moment map $\tilde{\mu}$ of the $K$-action.
\item[\rm (b)] Suppose $0\in \fk^*$ is a regular value of $\tilde{\mu}$ and $K$ acts on $\tilde{\mu}^{-1}(0)$ freely. Furthermore, we assume $\phi(L)$ is contained in $\tilde{\mu}^{-1}(0)$. Then,  we have
\begin{align}\label{eq23}
\pi^*\tilde{\beta}_{\tilde{H}_{HL}}=\tilde{\alpha}_{\tilde{H}}.
\end{align}
In particular, $\phi$ is minimal with respect to $\tilde{g}$ if and only if so is the reduced Lagrangian embedding $\phi_0: L_0\rightarrow M_0$ with respect to the Hsiang-Lawson metric of $\tilde{g}$. 
\end{description}
\end{theorem}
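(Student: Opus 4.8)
The plan is to reduce everything to the formulas already assembled in Proposition \ref{prop2.5} and Proposition \ref{prop2.9}, together with the conformal transformation law for the mean curvature $1$-form of a Lagrangian submanifold that was used in the proof of Proposition \ref{prop2.5}. The crucial structural point is that part (a) forces the relevant level value to be exactly $0$, which is precisely the hypothesis under which Proposition \ref{prop2.9} identifies the Hsiang--Lawson metric $\tilde{g}_{HL}$ with the canonical conformal change $e^{2f_0}g_0$; once this is known, part (b) becomes a bookkeeping computation in which the $d^c\log|\check\nu|$ and $d^c\check f$ contributions cancel.

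For part (a), I would first evaluate $\tilde\alpha_{\tilde H}$ on the orbit directions. By the general fact recalled at the beginning of Section \ref{Sec2}, $\phi(L)\subset\tilde\mu^{-1}(c)$ for some $c\in\mathfrak z(\fk^*)$, and the goal is to show $c=0$. Combining \eqref{eq22} with the identity $\alpha_H|_{\fk_p}=-\gamma_c=-\phi^*\gamma'_c$ from the proof of Proposition \ref{prop2.5} and the explicit form \eqref{eq19} of $\gamma'_c$ for the canonical moment map, the $n\,d^cf$ terms cancel and one obtains $\tilde\alpha_{\tilde H}(\tilde X)=-C\,c(X)$ for every $X\in\fk$; in particular this is constant along $L$. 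If $\tilde\alpha_{\tilde H}=dh$ is exact, then along the flow $t\mapsto\exp(tX)\cdot p$ inside the orbit $K\cdot p$ one has $\frac{d}{dt}h(\exp(tX)\cdot p)=-C\,c(X)$, so $h(\exp(tX)\cdot p)=h(p)-C\,c(X)\,t$. Since $K$ is compact the orbit is compact and $h$ is bounded on it, which forces $C\,c(X)=0$; as $C\neq0$ this gives $c(X)=0$ for all $X$, hence $c=0$. The minimal case $\tilde\alpha_{\tilde H}=0$ is immediate from $-C\,c(X)=0$.

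For part (b), with $c=0$ I would first transcribe the conformal transformation law onto the quotient: since $\tilde{g}_{HL}=e^{2f_0}g_0$ by Proposition \ref{prop2.9}, the same computation as in the derivation of \eqref{eq10} (now on $M_0$, with conformal factor $f_0$ and $\dim_\mathbb{R}L_0=n-l$) yields $\tilde\beta_{\tilde H_{HL}}=\beta_{H_0}-(n-l)\phi_0^*d^cf_0$. Pulling this back by $\pi$ and inserting \eqref{eq9} for $\pi^*\beta_{H_0}$, together with the expression \eqref{eq13} giving $(n-l)f_0=\log|\check\nu|+n\check f$ up to a constant and the formula \eqref{eq20} for $\gamma'_0$, the term $\pi^*\phi_0^*(d^c\log|\check\nu|)$ cancels; then, after rewriting $\phi^*\iota^*=\phi^*$ and $\phi^*\pi^*=\pi^*\phi_0^*$ (using $\pi\circ\phi=\phi_0\circ\pi$), the remaining $d^c\check f$ terms cancel as well, leaving $\pi^*\tilde\beta_{\tilde H_{HL}}=\alpha_H-n\phi^*d^cf=\tilde\alpha_{\tilde H}$ by \eqref{eq22}. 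This is \eqref{eq23}. The ``in particular'' then follows because $\pi$ is a surjective submersion, so $\pi^*$ is injective on forms; hence $\tilde\beta_{\tilde H_{HL}}=0$ if and only if $\tilde\alpha_{\tilde H}=0$, and via the Lagrangian isomorphism $V\mapsto\phi^*(i_V\tilde\omega)$ each vanishing is equivalent to the vanishing of the corresponding mean curvature vector.

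The step I expect to be the main obstacle is part (a): pinning down that exactness (in particular minimality) of $\tilde\alpha_{\tilde H}$ forces the level value $c$ to vanish. This is what makes the whole reduction work, since only at $c=0$ does the Ricci form computation simplify to \eqref{eq21} (for $c\neq0$ the curvature of the quotient, and hence $d\gamma'_c$, involves the second fundamental form of $\iota$, cf. Proposition \ref{prop2.3} and Remark \ref{rmk2.10}(ii)), and only then is $\tilde{g}_{HL}$ the plain conformal change used in part (b). By contrast, the computation in part (b) is routine once the cancellations are organized correctly.
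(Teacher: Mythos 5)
Your proposal is correct and follows essentially the same route as the paper: evaluate $\tilde{\alpha}_{\tilde{H}}$ on the orbit directions via \eqref{eq19} to force $c=0$ in part (a), then combine \eqref{eq9}, \eqref{eq13}, \eqref{eq20}, \eqref{eq22} and the conformal transformation law \eqref{eq25} so that the $d^c\log|\check{\nu}|$ and $d^c\check{f}$ terms cancel in part (b). The only cosmetic difference is in (a), where you rule out nonzero $c$ by boundedness of a primitive of $\tilde{\alpha}_{\tilde{H}}$ along a compact orbit, whereas the paper deduces that the primitive $u$ is $K$-invariant and hence $du(\tilde{X})=0$; the two arguments are interchangeable.
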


\begin{proof}
  We assume $\phi(L)$ is contained in $\tilde{\mu}^{-1}(c)$ for some $c\in \mathfrak{z(k^*)}$. Suppose $\tilde{\alpha}_{\tilde{H}}$ is an exact form,  i.e., there exists a smooth function $u\in C^{\infty}(L)$ so that  $\tilde{\alpha}_{\tilde{H}}=du$.  Then, \eqref{eq11} and \eqref{eq22} implies
  \begin{align}\label{eq24}
  du=\alpha_{H'}-\gamma_c-n\phi^*d^cf.
  \end{align}  
  Since $\tilde{g}$ and $L$ are $K$-invariant, so is the 1-form $\tilde{\alpha}_{\tilde{H}}$, and hence, $u$ is a $K$-invariant function.  Moreover, by \eqref{eq19}, we have
  \begin{align*}
\begin{aligned}
\gamma_c(\tilde{X})+n(\phi^*d^cf)(\tilde{X})=Cc(X)
\end{aligned}
\end{align*}
  for any $X\in \fk_p$. Thus, substituting $\tilde{X}$ to (24), we obtain
  \begin{align*}
  0=\alpha_{H'}(\tilde{X})-Cc(X)=g(JH', \tilde{X})-Cc(X)=-Cc(X)
  \end{align*}
  since $H'\in E$ and $E$ is $J$-invariant.  Therefore, $c=0$. This proves (a).

  Suppose $\phi(L)$ is contained in $\tilde{\mu}^{-1}(0)$.  Then, we have
  \begin{align}\label{eq25}
\begin{aligned}
\tilde{\beta}_{\tilde{H}_{HL}}&=\beta_{H_0}-(n-l)\phi^*_0d^cf_0=\beta_{H_0}-\phi^*_0d^c(\log|\check{\nu}|+n\check{f}).
\end{aligned}
\end{align}

By \eqref{eq22} and \eqref{eq25}, \eqref{eq9} becomes
\begin{align}\label{eq26}
\pi^*(\tilde{\beta}_{\tilde{H}_{HL}}+n\phi^*_0d^c\check{f})=\tilde{\alpha}_{\tilde{H}}+\gamma_0+n\phi^*d^cf.
\end{align}
On the other hand,  we have from \eqref{eq20} that $\gamma_0=-n(\phi^*d^cf-\pi^*\circ\phi_0^*d^c\check{f})$. Therefore, \eqref{eq26} becomes
\begin{align*}
\pi^*\tilde{\beta}_{\tilde{H}_{HL}}=\tilde{\alpha}_{\tilde{H}}.
\end{align*}
This proves (b).
\end{proof}

\begin{remark}\label{rmk2.12}
{\rm (i) Theorem \ref{thm2.11} holds even when $M$ is non-compact. If $M$ is a non Ricci flat K\"ahler-Einstein manifold, then $\tilde{g}$ coincides with the K\"ahler metric $g$ (up to constant multiples). However, the quotient structure $(M_0, \omega_0, J_0)$ is not necessarily K\"ahler-Einstein (See Remark \ref{rmk2.10}).

(ii) Theorem \ref{thm2.11} (a) is  a generalization of Proposition 3.2 in \cite{BG} or Theorem 6 in \cite{Pacini} in which  a similar statement  was proved for {\it homogeneous} Lagrangian submanifolds in a K\"ahler-Einstein manifold.  On the other hand, Theorem \ref{thm2.11} (b) is {\it not} a direct consequence of  Hsiang-Lawson's result in \cite{HL}. In fact, a similar statement does not hold in general without the assumption of the Ricci curvature. For example, the Hopf fibration $\pi: S^{2n-1}\rightarrow \mathbb{C}P^{n-1}$ is a typical example of the K\"ahler reduction by the standard $S^1$-action on the complex Euclidean space $M=\mathbb{C}^n$.  In this case,  $M$ is Ricci-flat and one can choose a moment map $\mu$ of the $S^1$-action so that $\mu^{-1}(0)=S^{2n-1}$. However, the moment map is not canonical in the sense of Proposition \ref{prop2.7}. Note that every $S^1$-orbits has the same volume in $S^{2n-1}$, and hence, the Hsiang-Lawson metric is a constant multiple of the Fubini-Study metric on $\mathbb{C}P^{n-1}$. Moreover, if $L_0$ is a compact minimal Lagrangian submanifold in $\mathbb{C}P^{n-1}$, then the pre-image $L=\pi^{-1}(L_0)$ is Hamiltonian-minimal  in $\mathbb{C}^n$. However, $L$ is never minimal in $\mathbb{C}^n$ by the compactness of $L$, and hence,  the minimality of $L_0$ does not correspond to the minimality of $L$ (in the classical sense). }
\end{remark}

\begin{remark}\label{rmk2.13}
{\rm When $(M, \omega, J)$ is not K\"ahler-Einstein, it is necessary to take a conformal change of the original K\"ahler metric $g$ in order to obtain the simple formula (23). In fact, a similar formula between the mean curvature form $\alpha_H$ of $\phi$ with respect to the original K\"ahler metric $g$ and $\beta'_{H_{HL}}$ of $\phi_0$ with respect to the Hsiang-Lawson metric $g_{HL}$ of $g$ is more complicated than \eqref{eq23} (see \eqref{eq9}). 

We remark that the closed 1-forms $\tilde{\alpha}_{\tilde{H}}$ and $\tilde{\beta}_{\tilde{H}_{HL}}$ are referred as {\it generalized mean curvature forms} in \cite{Behrndt} and \cite{SmoWang} or {\it Maslov forms} in \cite{LP} of $\phi$ and $\phi_0$, respectively.  Namely, $\tilde{\alpha}_{\tilde{H}}$ (or $\tilde{\beta}_{\tilde{H}_{HL}}$) is regarded as a "connection form" of a unitary connection $\hat\nabla$ on the trivial bundle $\phi^*K_M$ in the trivialization $\Omega_L$ defined by a unique extension of the volume form of $\phi$: 
\begin{align*}
\hat\nabla \Omega_L=\sqrt{-1}\tilde{\alpha}_{\tilde{H}}\Omega_L,
\end{align*} 
where $\hat\nabla:=\nabla+d^cf\otimes J$ on $TM$ (see Example 2 in \cite{SmoWang} or \cite{KK}), and we use the same symbol for the induced connection on $\phi^*K_M$.  This can be easily shown by using Proposition 4.2 in \cite{LP}. As shown in \cite{Behrndt}, \cite{KK}, \cite{LP} and \cite{SmoWang}, there are several advantages to consider the (closed) Maslov forms in the non K\"ahler-Einstein setting. This point is a crucial difference between $\tilde{\alpha}_{\tilde{H}}$ and ${\alpha}_H$ in our setting, and gives a reason why we consider $\tilde{\alpha}_{\tilde{H}}$ instead of $\alpha_{H}$. From this point of view, \eqref{eq23} shows that  the closed Maslov forms are preserved by $\pi$ whenever $\phi(L)$ is contained in the 0-level set of $\tilde{\mu}$.  
}
\end{remark}

\section{Examples: Reductions of homogeneous hypersurfaces in a K\"ahler manifold}\label{Sec3}
In this section, we give some explicit examples of K\"ahler reductions by using a circle action obtained from a cohomogeneity one action on a K\"ahler manifold. The main result of this section is Theorem \ref{thm3.7}. 

\subsection{Preliminaries}\label{subsec3.1}
Let $(M, \omega)$ be a symplectic manifold.  
Suppose a connected Lie group $G$ acts on $M$ in a Hamiltonian way with the moment map $\mu: M\rightarrow \fg^*$. For any closed subgroup $G'$ of $G$, the induced action $G'\curvearrowright M$ is also a Hamiltonian action. In fact, a moment map is given by $\mu_{G'}:={\rm pr}_{(\fg')^*}\circ \mu$, where $\fg'$ is the Lie algebra of $G'$ and  ${\rm pr}_{\mathfrak{(\fg')}^*}$ is the natural projection onto $(\fg')^*$ (see \cite{Audin}).

  Let $S$ be a connected closed subgroup of $G$ with Lie algebra $\fs$.  Denote the centralizer of $S$ in $G$ by $Z_G(S):=\{g\in G; gs=sg\ \forall s\in S\}$. Then, $Z_G(S)$ is a Lie subgroup of $G$ and the Lie subalgebra is given by $\mathfrak{z_g(s)}:=\{X\in \fg; [X, V]=0\ \forall V\in \fs\}$. If $G=S$, then $Z_S(S)$ is the center of $S$, and we denote it and its Lie algebra by $C(S)$ and $\mathfrak{c(s)}$, respectively.    

\begin{lemma}\label{lem3.1}
Let $Z'$ be a connected closed subgroup of $Z_G(S)$ with Lie algebra $\mathfrak{z}'$ and $\mu_{Z'}: M\rightarrow (\mathfrak{z}')^*$ the moment map of the $Z'$-action.  Then,  the level set $\mu^{-1}_{Z'}(c)$ is a $S$-invariant subset in $M$ for any $c\in (\mathfrak{z'})^*$. Conversely, if any level set of the moment map $\mu_{Z'}$ of a connected closed subgroup $Z'$ in $G$ is $S$-invariant and $Z'\curvearrowright M$ is effective, then $Z'$ is a subgroup of $Z_G(S)$. 
\end{lemma}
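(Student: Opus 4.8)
The plan is to reduce both implications to a single infinitesimal statement and then exploit the defining relation $d\mu^X = i_{\tilde X}\omega$ of the moment map together with the non-degeneracy of $\omega$. The first observation is that $\mu_{Z'}^{-1}(c)$ is $S$-invariant for every $c\in(\mathfrak{z}')^*$ if and only if $\mu_{Z'}$ is constant along $S$-orbits, i.e. $\mu_{Z'}\circ s = \mu_{Z'}$ for all $s\in S$: for $p\in M$ one takes $c=\mu_{Z'}(p)$ and reads off invariance of the fibre through $p$. Since $\mu_{Z'} = {\rm pr}_{(\mathfrak{z}')^*}\circ\mu$, this is equivalent to $\mu^X\circ s = \mu^X$ for all $X\in\mathfrak{z}'$ and $s\in S$, and, because $S$ is connected, to the infinitesimal identity $\tilde V\mu^X = 0$ for all $V\in\fs$ and $X\in\mathfrak{z}'$.

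For the forward direction I would assume $Z'\subseteq Z_G(S)$, so that $\mathfrak{z}'\subseteq\mathfrak{z_g(s)}$ and $[\fs,\mathfrak{z}']=0$. Then ${\rm ad}(V)X=0$ for $V\in\fs$, $X\in\mathfrak{z}'$, and exponentiating along the connected group $S$ gives ${\rm Ad}(s)X = X$ for all $s\in S$. Using the equivariance $\mu(sp)={\rm Ad}^*(s^{-1})\mu(p)$, for $X\in\mathfrak{z}'$ one gets $\langle\mu(sp),X\rangle = \langle\mu(p),{\rm Ad}(s^{-1})X\rangle = \langle\mu(p),X\rangle$, whence $\mu_{Z'}\circ s = \mu_{Z'}$ and every level set is $S$-invariant.

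For the converse I would start from the infinitesimal equivariance of $\mu$: differentiating $\mu(sp)={\rm Ad}^*(s^{-1})\mu(p)$ in a direction $V\in\fs$ yields $\tilde V\mu^X = \mu^{[V,X]}$ for every $X\in\mathfrak{z}'$. The $S$-invariance of all level sets, in its infinitesimal form above, then forces $\mu^{[V,X]}\equiv 0$ on $M$. Being identically zero, it has $d\mu^{[V,X]} = i_{\widetilde{[V,X]}}\omega = 0$, and non-degeneracy of $\omega$ gives $\widetilde{[V,X]}=0$, i.e. $\exp(t[V,X])$ acts trivially on $M$. Invoking effectiveness to conclude $[V,X]=0$ for all $V\in\fs$, $X\in\mathfrak{z}'$, we obtain $[\fs,\mathfrak{z}']=0$, i.e. $\mathfrak{z}'\subseteq\mathfrak{z_g(s)}$; since $Z'$ is connected and $Z_G(S)$ is closed with Lie algebra $\mathfrak{z_g(s)}$, this upgrades to $Z'\subseteq Z_G(S)$.

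The main obstacle is precisely the last step of the converse, the passage from $\widetilde{[V,X]}=0$ to $[V,X]=0$. The delicate point is that $[V,X]$ need not lie in $\mathfrak{z}'$, so the stated hypothesis that $Z'$ acts effectively does not apply to it verbatim; what is really needed is that the ineffective kernel $\{W\in\fg : \tilde W = 0\}$ of the ambient action meets the subspace $[\fs,\mathfrak{z}']$ trivially. I would handle this by noting that $\widetilde{[V,X]}=0$ places $\exp(t[V,X])$ in the kernel of $G\to {\rm Diff}(M)$, and then making the effectiveness assumption precise enough—effectiveness of the subgroup generated by $S$ and $Z'$, or of $G$ itself—to force that kernel to be trivial. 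A secondary, routine point to record is the group-versus-algebra bookkeeping, translating $[\fs,\mathfrak{z}']=0$ into the subgroup inclusion via connectedness of $S$ and $Z'$.
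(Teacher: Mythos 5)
Your proof follows essentially the same route as the paper's. The forward direction is identical: ${\rm Ad}(s)X=X$ for $X\in\mathfrak{z}'\subseteq\mathfrak{z_g(s)}$ combined with equivariance of $\mu$. For the converse, the paper sets $f_{X',V}:=\omega(\tilde{X'},\tilde{V})$ and uses the general fact that the bracket of two symplectic vector fields is Hamiltonian with Hamiltonian function $f_{X',V}$, whereas you identify the same function as $\mu^{[V,X']}$ via infinitesimal equivariance and then apply $d\mu^{Y}=i_{\tilde{Y}}\omega$ with $Y=[V,X']$; both computations terminate at $\widetilde{[V,X']}=0$. The ``main obstacle'' you single out at the end is genuine, and it is in fact a gap in the paper's own proof: the paper concludes $[X',V]=0$ ``because $Z'\curvearrowright M$ is effective'', but $[X',V]$ lies in $\fg$ and in general in neither $\mathfrak{z}'$ nor $\fs$, so effectiveness of $Z'$ does not apply to it. Without effectiveness of a larger group the converse can actually fail: take $G=SO(3)\times SO(3)$ acting on $M=S^2$ through the first factor, $S=\{e\}\times SO(3)$ (which acts trivially, so every level set of every moment map is $S$-invariant), and $Z'$ the diagonal circle over a fixed rotation axis; then $Z'$ is closed, connected and acts effectively, yet $Z_G(S)=SO(3)\times\{e\}$ does not contain $Z'$. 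Your proposed repair --- assuming effectiveness of the $G$-action, or of the subgroup generated by $S$ and $Z'$ --- is exactly what is needed, and it costs nothing here since the applications of the lemma (Lemma 3.5 and Subsection 3.2) already assume $G\curvearrowright M$ is effective.
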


\begin{proof}
Take a point $p\in \mu^{-1}_{Z'}(c)$. Since ${\rm Ad}(S)X=X$ for any $X\in\mathfrak{z}'\subset \mathfrak{z_g(s)}$, we see
\begin{align*}
\begin{aligned}
\langle \mu_{Z'}(s\cdot p), X\rangle&=\langle {\rm pr}_{(\mathfrak{z'})^*}\circ\mu(s\cdot p), X\rangle=\langle \mu(s\cdot p), X\rangle\\
&=\langle {\rm Ad}^*(s)\mu(p), X\rangle=\langle \mu(p), {\rm Ad}(s^{-1})X\rangle\\
&=\langle \mu(p), X\rangle=\langle \mu_{Z'}(p), X\rangle
\end{aligned}
\end{align*}
for any $s\in S$. This shows $S\cdot p \subseteq \mu^{-1}_{Z'}(c)$ for any $p\in \mu_{Z'}^{-1}(c)$, i.e., $\mu_{Z'}^{-1}(c)$ is $S$-invariant.

Conversely, we assume $Z'$ is a closed connected subgroup of $G$ with Lie algebra $\mathfrak{z}'$ and each level set of $\mu_{Z'}$ is $S$-invariant.  For $X'\in \mathfrak{z}'$ and $V\in \fs$, we define a smooth function on $M$ by $f_{X',V}(p):=\omega_p(\tilde{X'}, \tilde{V})$. Since $\tilde{X'}$ and $\tilde{V}$ are symplectic vector fields on $M$, it turns out that $f_{X',V}$ is a Hamiltonian function with respect to the vector field $[\tilde{V}, \tilde{X'}]$, that is, $df_{X',V}=i_{[\tilde{V}, \tilde{X'}]}\omega$.

   By the assumption, ${\mu}^{X'}_{Z'}$ is a $S$-invariant function, and hence,  we see
    \begin{align*}
  f_{X',V}(p)=\omega_{p}(\tilde{X'},\tilde{V})=d\mu^{X'}_{Z'}(\tilde{V})_{p}=0.
  \end{align*}
  Thus, the smooth function  $f_{X',V}$ is identically zero on $M$. In particular, we see $0\equiv df_{X',V}=i_{[\tilde{V},\tilde{X'}]}\omega$, and hence, $[\tilde{X'}, \tilde{V}]=0$ on $M$ since $\omega$ is non-degenerate. Because $Z'\curvearrowright M$ is effective, this implies  $[X', V]=0$. Therefore, $X'\in \mathfrak{z_g(s)}$ for any $X'\in \mathfrak{z}'$, and hence, $ \mathfrak{z}'$ is a Lie subalgebra of $\mathfrak{z_g(s)}$. Thus, there exists a unique connected Lie subgroup $Z''$ in $Z_G(S)$ with Lie algebra $\mathfrak{z}'$. Since $Z'$ and $Z''$ are connected Lie subgroups of $G$ with the same Lie algebra, we conclude $Z'=Z''\subset Z_G(S)$.
\end{proof}

\begin{remark}\label{rmk3.2}
  {\rm Lemma \ref{lem3.1} is a generalization of Proposition III.2.12 in \cite{Audin}. In fact, if we take $G=S$ as a connected abelian group $T$ so that $C(T)=T$, any $T$-orbit is contained in a level set of the moment map $\mu_Z=\mu$. Conversely, if any $G$-orbit is contained in a level set of $\mu$ of $G$ and $G\curvearrowright M$ is effective, then $G$ is abelian. 
  }
\end{remark}

Denote the identity component of $Z_G(S)$ and its moment map by $Z_G(S)^0$ and $\mu_Z: M\rightarrow \mathfrak{z_g(s)}^*$, respectively. If $Z'$ is a subgroup of $Z_G(S)^0$, then it is easy to see that $\mu^{-1}_{Z'}(c)\supseteq \mu^{-1}_Z(c)$ for any $c\in (\mathfrak{z}')^*\subseteq \mathfrak{z_g(s)}^*$. In particular, $\mu^{-1}_Z(0)$ is the smallest $S$-invariant $0$-level set of the moment map among the actions of subgroups of $Z_G(S)^0$. Because of this reason, we focus on the $Z_G(S)^0$-action.

Suppose $Z_G(S)^0$ is compact and $c\in \mathfrak{z_g(s)}^*\cap \mu_Z(M)$ is a regular value of $\mu_Z$. Denote the stabilizer subgroup of ${\rm Ad}^*(Z_G(S)^0)\curvearrowright \mathfrak{z_g(s)}^*$ at $c$ by $Z_c:=\{g\in Z_G(S)^0; {\rm Ad}^*(g)c=c\}$. We further assume $Z_c$ acts on $\mu^{-1}_{Z}(c)$ freely. Then, the Marsden-Weinstein-Meyer symplectic reduction yields a symplectic manifold $(M_c:=\mu^{-1}_{Z}(c)/Z_c, \omega_c)$  (cf. \cite{Audin}). We denote the inclusion and the projection by  $\iota: \mu^{-1}_{Z}(c)\rightarrow M$ and $\pi: \mu^{-1}_{Z}(c)\rightarrow M_c$, respectively. 

 By Lemma \ref{lem3.1}, $S$ acts on $\mu^{-1}_{Z}(c)$, and we define the natural action of $S$ on $M_c$ so that  $s\circ \pi=\pi\circ s$ for $s\in S$. Because $Z_c$ is a subgroup of $Z_G(S)$,  the action is well-defined.  
Since $\pi^*{\omega}_c=\iota^*\omega$ and $S\curvearrowright M$ is symplectic, we easily verifies that $S\curvearrowright M_c$ is also symplectic. Moreover, $S\curvearrowright M_c$ is Hamiltonian. In fact, because the restriction  $\mu_S|_{\mu^{-1}_Z(c)}: \mu^{-1}_Z(c)\rightarrow \fs^*$ is  $Z_c$-invariant, we obtain a map $\overline{\mu}_S: M_c\rightarrow \fs^*$ and $\overline{\mu}_S$ is the moment map of the $S$-action on $M_c$.  

If furthermore, $M$ is K\"ahler and $G$ acts on $M$ as holomorphic isometries,  then  we obtain the K\"ahler reduction $\pi: \mu^{-1}_Z(c)\rightarrow M_c$ in the sense of Theorem 7.2.3 in \cite{Futaki}, and we see the following:
\begin{lemma}\label{lem3.3}
Suppose $M$ is K\"ahler and $G$ acts on $M$ as holomorphic isometries.  Then, $S$ acts on ${M}_c$ as holomorphic isometries.
\end{lemma}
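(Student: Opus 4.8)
The plan is to verify directly that each $s\in S$ preserves both the reduced metric $g_c$ and the reduced complex structure $J_c$ on $M_c$, by pushing the holomorphic-isometry property of $s$ on $M$ through the Riemannian submersion $\pi\colon\mu_Z^{-1}(c)\to M_c$. Recall that $S$ is a subgroup of $G$, so $s$ preserves $g$ and $J$ on $M$; by Lemma \ref{lem3.1} it preserves $\mu_Z^{-1}(c)$, and the reduced action is defined so that $s\circ\pi=\pi\circ s$, whence $\pi_*\circ s_*=s_*\circ\pi_*$. As in the construction recalled in Section \ref{Sec2}, the reduced structures are read off from the horizontal distribution $E$ via $\pi^*g_c=g|_E$ and $J_c\circ\pi_*=\pi_*\circ J$ on $E$ (with $E$ a $J$-invariant subbundle), so the whole statement reduces to showing that $s_*$ carries $E$ into $E$.

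The key step, and the only place the centralizer hypothesis enters, is the claim $s_*(E_p)=E_{s\cdot p}$. First I would show that $s_*$ preserves the tangent spaces to the $Z_c$-orbits. For $X\in\fz_c\subseteq\mathfrak{z_g(s)}$ one has $\mathrm{Ad}(s)X=X$ for all $s\in S$, since $[X,V]=0$ for every $V\in\fs$ and $S$ is connected; differentiating the identity $s\cdot(\exp(tX)\cdot p)=\exp(t\,\mathrm{Ad}(s)X)\cdot(s\cdot p)$ at $t=0$ then gives $s_*\tilde{X}_p=\tilde{X}_{s\cdot p}$, so $s_*$ maps $(\fz_c)_p:=T_p(Z_c\cdot p)$ isomorphically onto $(\fz_c)_{s\cdot p}$. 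Since $s$ is a $g$-isometry that preserves $T\mu_Z^{-1}(c)$ and the orbit directions, it must also preserve their $g$-orthogonal complement, i.e. $s_*(E_p)=E_{s\cdot p}$, as desired.

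With this in hand the two conclusions are immediate. For the metric: given $u,v\in T_{\pi(p)}M_c$, lift them to $u',v'\in E_p$; then $s_*u',s_*v'\in E_{s\cdot p}$ project to $s_*u,s_*v$, and $g_c(s_*u,s_*v)=g(s_*u',s_*v')=g(u',v')=g_c(u,v)$ because $s$ is a $g$-isometry. For the complex structure: using that $E$ is $J$-invariant, that $s$ is holomorphic on $M$, and the defining relation of $J_c$ at both $p$ and $s\cdot p$, one computes $s_*J_cu=s_*\pi_*Ju'=\pi_*s_*Ju'=\pi_*Js_*u'=J_c\pi_*s_*u'=J_cs_*u$, so $s_*$ commutes with $J_c$. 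Hence $s$ is a holomorphic isometry of $(M_c,g_c,J_c)$. The main, and essentially only, obstacle is the preservation of the horizontal distribution $E$; this is exactly where the hypothesis $Z_c\subseteq Z_G(S)$ is indispensable, as it guarantees that $S$ permutes the $Z_c$-orbits and thus respects the orthogonal splitting $T\mu_Z^{-1}(c)=E\oplus(\fz_c)$.
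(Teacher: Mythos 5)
Your argument is sound, and essentially coincides with the paper's, exactly when the level $c$ is fixed by the whole coadjoint action of $Z_G(S)^0$ --- in particular when $c=0$ (where $Z_0=Z_G(S)^0$ and the paper disposes of the claim by the same direct verification, merely stated as ``one easily verifies''), and in all the cohomogeneity-one applications of Subsection \ref{subsec3.2}, where $Z_G(S)^0$ is a circle and every value is central. In that situation the quotient is by the full group, the construction of $(g_c,J_c)$ from Section \ref{Sec2} applies verbatim, and your three steps (equivariance $s_*\tilde{X}_p=\tilde{X}_{s\cdot p}$ via $\mathrm{Ad}(s)X=X$, preservation of $E$ by orthogonality, and the two computations for $g_c$ and $J_c$) are all correct.

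The gap is the case of a non-central regular value $c$ of a nonabelian $Z_G(S)^0$, which the setting of Subsection \ref{subsec3.1} allows and which is why the paper's proof splits into two cases. For such $c$ the level set $\mu_Z^{-1}(c)$ is preserved only by the stabilizer $Z_c\subsetneq Z_G(S)^0$, and the reduced K\"ahler structure on $M_c=\mu_Z^{-1}(c)/Z_c$ is \emph{not} given by the horizontal-distribution recipe you invoke. Writing $\mathfrak{z}_p:=\{\tilde{X}_p;\,X\in\mathfrak{z_g(s)}\}$ and $(\mathfrak{z}_c)_p:=\{\tilde{X}_p;\,X\in\mathfrak{z}_c\}$, one has $T_p\mu_Z^{-1}(c)=\ker d(\mu_Z)_p=(J\mathfrak{z}_p)^{\perp}$, so the orthogonal complement of the $Z_c$-orbit directions inside $T_p\mu_Z^{-1}(c)$ is $E_p=\bigl(J\mathfrak{z}_p\oplus(\mathfrak{z}_c)_p\bigr)^{\perp}$, whose image under $J$ is $\bigl(\mathfrak{z}_p\oplus J(\mathfrak{z}_c)_p\bigr)^{\perp}$ --- a different subspace unless $\mathfrak{z}_c=\mathfrak{z_g(s)}$. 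Hence $E$ is in general not $J$-invariant and the defining relation $J_c\circ\pi_*=\pi_*\circ J$ on $E$, on which your whole computation rests, is not available. This is precisely why the paper defines the K\"ahler structure for $c\neq 0$ via the shifting trick, identifying $\mu_Z^{-1}(c)/Z_c$ with $\Psi^{-1}(0)/Z_G(S)$ inside $M\times\mathcal{O}_{-c}$ where the level \emph{is} central, and then proves holomorphy of the $S$-action by running the $c=0$ argument on that product. To repair your proof you must either restrict to central $c$ (which suffices for every application in the paper) or carry out your verification on $\Psi^{-1}(0)\subset M\times\mathcal{O}_{-c}$.
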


\begin{proof}
We have already shown that $S\curvearrowright M_c$ is symplectic. When $c=0$, by using the structure result of $M_0$ described in Section \ref{Sec2}, one easily verifies that $S\curvearrowright M_0$ is holomorphic since so is $S\curvearrowright M$, $Z_0=Z_G(S)^0$ and $S$ commutes with the $Z_G(S)^0$-action. When $c\neq 0$, we use the shifting trick since $c$ is a general element in $\mathfrak{z}_g(s)^*$.   Set $\mathcal{O}_{-c}:={\rm Ad}^*(Z_G(S)^0)(-c)$. Since $Z_G(S)^0$ is compact, the Kirillov-Kostant-Souriau symplectic form on $\mathcal{O}_{-c}$ is a K\"ahler form so that the inclusion $i: \mathcal{O}_{-c} \rightarrow \mathfrak{z_g(s)}^*$ is the moment map for the $Z_G(S)$-action. Then, $Z_G(S)$ acts on the product K\"ahler manifold $M\times \mathcal{O}_{-c}$ in a Hamiltonian fashion with moment map $\Psi:=\mu_Z+i$. Then, we have an inclusion $\mu^{-1}_Z(c)\rightarrow \Psi^{-1}(0)$ by $p\mapsto (p,c)$ and $\mu^{-1}_Z(c)/Z_c$ is identified with $\Psi^{-1}(0)/Z_G(S)$ by $[p]\mapsto [(p,c)]$ (see \cite{Futaki}).  Moreover, we induce a K\"ahler structure on $\mu^{-1}_Z(c)/Z_c$ from the ones on $\Psi^{-1}(0)/Z_G(S)$. Since the action $S\curvearrowright M\times \mathcal{O}_{-c}$ defined by $s\cdot (p,c)\mapsto (sp,c)$ is holomorphic and commutes with $Z_G(S)^0$-action, we see $S\curvearrowright  \Psi^{-1}(0)/Z_G(S)$ is holomorphic.  This implies the lemma.
\end{proof}

    \begin{example}\label{ex3.4}
  {\rm Let $S$ be the special unitary group $SU(n)$. Consider the $SU(n)$-action on $\mathbb{C}^n$ via the natural representation of $SU(n)$. Note that the center of $SU(n)$ is discrete since $SU(n)$ is semi-simple. A principal $SU(n)$-orbit is the hypersphere $S^{2n-1}(r)$ with radius $r>0$ and the connected component of the centralizer of $S=SU(n)$ in $G=U(n)$ is $Z_G(S)^0=\{e^{\sqrt{-1}\theta} Id_n; \theta\in [0,2\pi]\}\simeq S^1$. One verifies that $S^{2n-1}(r)=\mu^{-1}_{Z}(c)$ for some $c\in \mathfrak{z_g(s)}^*\simeq \mathbb{R}$. The $Z_G(S)^0$-action is nothing but the Hopf action on $\mathbb{C}^n$ and the K\"ahler quotient $M_c$ is the complex projective space $\mathbb{C}P^n$. Moreover,   $SU(n)$ acts on $M_c$ transitively.}
  \end{example}


\subsection{Reductions of homogeneous hypersurfaces}\label{subsec3.2}
In this subsection, we assume $M$ is a simply connected compact symplectic manifold (e.g., a compact K\"ahler-Einstein manifold of positive Ricci curvature) and $G$ is a compact connected Lie group acting on $M$ effectively in a Hamiltonian way.  
We equip $M$ with a $G$-invariant almost K\"ahler structure on $M$. Note that such a structure always exists (see Appendix in \cite{BL}). Throughout this subsection, we denote the Riemannian metric and the almost complex structure by $g$ and $J$, respectively.

 In the following, we further assume $S\curvearrowright M$ is a cohomogeneity one action, namely,  actions such that the principal orbits are real hypersurfaces in $M$. For the $S$-action, we set 
\begin{align*}
\begin{aligned}
M_{pri(S)}&:=\{p\in M; S\cdot p\ {\rm is\ a\ principal\ orbit}\},\\
M_{sing(S)}&:=\{p\in M; S\cdot p\ {\rm is\ a\ singular\ orbit}\},\\
M_{reg(\mu_Z)}&:=\{p\in M; p\ {\rm is\ a\ regular\ point\ of}\ \mu_Z\}\ {\rm and}\\
M_{cri(\mu_Z)}&:=\{p\in M; p\ {\rm is\ a\ critical\ point\ of}\ \mu_Z\}.
\end{aligned}
\end{align*}

We note that, if $M$ is simply connected, then the cohomogeneity one isometric action does not admit any exceptional orbit. Moreover, under the assumption of $M$, the orbit space $M/S$ is homeomorphic to the closed interval $[0,1]$, and there exist exactly two singular orbits $S\cdot p_1$ and $S\cdot p_2$ (cf. Section 2.9.3 in \cite{BCO}). Namely, we have $M_{sing(S)}=S\cdot p_1\sqcup S\cdot p_2$.

 For any real hypersurface $\overline{M}$ in $M$, an almost contact metric structure on  $\overline{M}$ is induced from the almost K\"ahler structure on $M$. For a (local) unit normal vector field $N$ on $\overline{M}$, we define a vector field $\xi\in \Gamma(T\overline{M})$ by $\xi:=-JN$ and call it the {\it Reeb vector field} of $\overline{M}$.

\begin{lemma}\label{lem3.5}
Assume $G$ acts on $M$ effectively in a Hamiltonian way. Let $S$ be a connected closed subgroup of $G$ acting on $M$ as a cohomogeneity one isometric action. If $Z_G(S)$ is not discrete and compact, then we have the following:
\begin{description}
\item[\rm (a)] ${\rm dim}Z_G(S)=1$, namely, the $Z_G(S)^0$-action is a circle action.
\item[\rm (b)] The orbit space $M/S$ is homeomorphic to $\mu_Z(M)$ via $S\cdot p\mapsto \mu_Z(p)$. In particular, we have $M_{pri(S)}=M_{reg(\mu_Z)}$, $M_{sing(S)}=M_{cri(\mu_Z)}$ and $S\cdot p=\mu_Z^{-1}(c)$ with $c=\mu_Z(p)$ for any $p\in M$.
\item[\rm (c)] For any regular value $c\in \mathfrak{z_g(s)^*}$,  $Z_G(S)^0$ acts on  $\mu_Z^{-1}(c)$ freely. Moreover, the $Z_G(S)^0$-action generates an isometric Reeb flow on $\mu_Z^{-1}(c)$ i.e., there exists an element $v\in \mathfrak{z_g(s)}$ such that $\tilde{v}$ is the Reeb vector field on $\mu_Z^{-1}(c)$, and $Z_G(S)^0$-orbits contained in $\mu_Z^{-1}(c)$ are mutually isometric.
\end{description}
\end{lemma}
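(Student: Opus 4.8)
The plan is to extract one geometric fact and feed it into all three parts. By Lemma \ref{lem3.1} every level set of $\mu_Z$ is $S$-invariant, so each $\mu_Z^X$ with $X\in\mathfrak{z_g(s)}$ is constant along $S$-orbits. At a principal point $p$, the orbit $S\cdot p$ is a hypersurface with unit normal $N$ and Reeb field $\xi=-JN$, so $\nabla\mu_Z^X$ is normal to $S\cdot p$, i.e. proportional to $N$; since the moment map condition $d\mu_Z^X=i_{\tilde X}\omega$ gives $\tilde X=-J\nabla\mu_Z^X$, we get $\tilde X_p\in\mathbb R\,\xi_p$ for every $X$. Hence $\dim(Z_G(S)^0\cdot p)\le 1$ at every principal point, and since the principal stratum is open and dense the maximal orbit dimension of $Z_G(S)^0$ is $\le 1$. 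To prove \textbf{(a)} I upgrade this to $\dim Z_G(S)^0=1$: restricting to a maximal torus $T\subseteq Z_G(S)^0$, the principal isotropy of an effective torus action is finite (a positive-dimensional isotropy subtorus would fix the dense principal set, hence all of $M$, contradicting effectiveness), so the maximal $T$-orbit has dimension $\dim T$; evaluating at a point that is principal for both $S$ and $T$ forces $\dim T\le 1$, and non-discreteness gives $\dim T=1$. Thus $Z_G(S)^0$ has rank one and is $T^1$, $SU(2)$, or $SO(3)$. The latter two have no one-dimensional orbits (their orbit dimensions lie in $\{0,2,3\}$), so the bound $\le 1$ would force the dense principal set to be $Z_G(S)^0$-fixed, hence the action to be trivial, contradicting effectiveness. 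Therefore $Z_G(S)^0=T^1$.

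For \textbf{(b)} I fix a generator $v$ of $\mathfrak{z_g(s)}\cong\mathbb R$ and observe that $\mu_Z=\mu_Z^v$ descends to a continuous function $\bar\mu_Z$ on $M/S\cong[0,1]$, smooth on the interior $(0,1)$. Its derivative vanishes at an interior point precisely when $\nabla\mu_Z^v$, equivalently $\tilde v$, vanishes along the corresponding principal orbit, that is, when this codimension-one orbit lies in the fixed set $M^{Z_G(S)^0}=\{\tilde v=0\}$. But $Z_G(S)^0$ acts by holomorphic isometries, so the components of its fixed set are $J$-invariant and hence of even codimension, which rules out a codimension-one fixed subset. Thus $\bar\mu_Z$ has no interior critical point, is strictly monotone, and is a homeomorphism $M/S\to\mu_Z(M)$. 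Reading off fibres gives $S\cdot p=\mu_Z^{-1}(\mu_Z(p))$; since the critical points of $\mu_Z$ are exactly $M^{Z_G(S)^0}$ and these can only sit over the two endpoints of $[0,1]$ (the singular orbits), we conclude $M_{pri(S)}=M_{reg(\mu_Z)}$ and $M_{sing(S)}=M_{cri(\mu_Z)}$.

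For \textbf{(c)}, let $c$ be a regular value, so $\mu_Z^{-1}(c)$ is a single principal $S$-orbit. Because $Z_G(S)^0$ commutes with $S$, its isotropy is the same finite cyclic group $\Gamma$ at every point of $\mu_Z^{-1}(c)$, and $\Gamma$ fixes this hypersurface pointwise; the even-codimension property again forces $\Gamma=\{e\}$, so the action is free. Next, $|\tilde v|$ is $S$-invariant and therefore constant on the single $S$-orbit $\mu_Z^{-1}(c)$, so rescaling $v$ makes $\tilde v$ a unit field and, after fixing the sign, $\tilde v=\xi$; this identifies the $Z_G(S)^0$-flow (isometric, as $Z_G(S)^0\subset G$) with the Reeb flow. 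The orbits are mutually isometric because $S$ acts transitively on $\mu_Z^{-1}(c)$ by isometries and carries $Z_G(S)^0$-orbits to $Z_G(S)^0$-orbits.

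I expect the main obstacle to be part (a): converting the easy infinitesimal bound ``orbits are at most one-dimensional'' into the global statement $\dim Z_G(S)^0=1$, which requires excluding the rank-one groups $SU(2)$ and $SO(3)$. The decisive tool there, and equally for the freeness in (c) and the monotonicity in (b), is the parity constraint that fixed-point sets of holomorphic isometries have even codimension.
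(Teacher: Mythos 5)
Your proof is correct, but it follows a genuinely different route from the paper's in all three parts. For (a) the paper argues in one line: at a point that is simultaneously a regular point of $\mu_Z$ and on a principal $S$-orbit, the level set $\mu_Z^{-1}(c_0)$ is a submanifold of codimension $\dim Z_G(S)$ containing the codimension-one orbit $S\cdot p_0$, whence $\dim Z_G(S)\le 1$; this replaces your longer detour through the pointwise bound $\tilde{X}_p\in\mathbb{R}\,\xi_p$, the triviality of principal isotropy for effective torus actions, and the exclusion of the rank-one groups $SU(2)$ and $SO(3)$. For (b) the paper identifies each regular fibre with a single principal orbit by invoking the connectedness of moment-map level sets for compact Hamiltonian actions on compact manifolds (the reference \cite{GS}), then rules out regular values over singular orbits by a counting argument and uses the convexity theorem to see that the image is an interval; your strict-monotonicity argument, powered by the observation that the zero set of $\tilde{v}$ is the fixed set of the circle and hence has even codimension, is more elementary and avoids both external inputs. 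For (c) the paper proves that the stabilizers of $Z_G(S)^0$ agree at \emph{all} principal points by conjugating inside $S'=SZ_G(S)^0$ and then uses density of the principal stratum plus effectiveness, whereas you kill the common finite stabilizer of a single fibre by the same even-codimension constraint; both are valid. Two small elisions you should fill in: in (b), the inclusion $M_{sing(S)}\subseteq M_{cri(\mu_Z)}$ requires the (one-line) remark that each singular orbit is the entire fibre over the maximum or minimum value of $\mu_Z$ and therefore consists of critical points; and in (c), the finiteness of $\Gamma$ should be justified by noting that at a regular point $\tilde{v}\neq 0$, so the isotropy is a proper closed, hence finite, subgroup of $S^1$, and the parity argument only shows that each nontrivial element of $\Gamma$ acts trivially on $M$, after which effectiveness gives $\Gamma=\{e\}$.
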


\begin{proof}
For simplicity, we set $Z:=Z_G(S)^0$ and $\mathfrak{z}:=\mathfrak{z_g(s)^*}$ in this proof.  

Since $\mu_{Z}: M\rightarrow \mathfrak{z}^*$ is a smooth map, Sard's theorem implies $M_{reg(\mu_Z)}$ is a dense subset in $M$. On the other hand, $M_{pri(S)}$ is open dense in $M$. Thus, there exists a point $p_0$ such that $p_0\in M_{reg(\mu_Z)}\cap M_{pri(S)}$. 
Then, $c_0:=\mu_{Z}(p_0)$ is a regular value, and hence, $\mu^{-1}_{Z}(c_0)$ is a submanifold in $M$ with ${\rm codim} \mu^{-1}_{Z}(c_0)={\rm dim}\mathfrak{z^*}={\rm dim}Z$. Since $S\cdot p_0$ is a submanifold contained in $\mu^{-1}_{Z}(c_0)$ by Lemma \ref{lem3.1} and $S\curvearrowright M$ is cohomogeneity one, we see ${\rm dim}Z={\rm codim} \mu^{-1}_{Z}(c_0)\leq {\rm codim} S\cdot p_0=1$. Because $Z$ is not discrete, we conclude ${\rm dim} Z={\rm codim} \mu^{-1}_{Z}(c_0)=1$. Thus, $Z\curvearrowright M$ is a circle action by the compactness of $Z$.
This proves (a).

 Next, we shall show (b). 
For any $p\in M_{reg(\mu_Z)}\cap M_{pri(S)}$, $c=\mu_Z(p)$ is a regular value and $\mu_Z^{-1}(c)$ is a hypersurface in $M$ by (a). Therefore, the regular orbit $S\cdot p$ is a connected component of $\mu_Z^{-1}(c)$ since $S\cdot p$ is a connected open subset in $\mu_Z^{-1}(c)$ and the action is proper. On the other hand, it is known that $\mu^{-1}_{Z}(c)$ is a connected subset in $M$ by the compactness of $M$ and $Z$ (see \cite{GS}). Thus, we see $S\cdot p=\mu_Z^{-1}(c)$ for any $p\in M_{reg(\mu_Z)}\cap M_{pri(S)}$. 

We claim that $M_{reg(\mu_Z)}\cap M_{sing(S)}=\{\phi\}$. Suppose the contrary were true. Take a point $p_1\in M_{reg(\mu)}\cap M_{sing(S)}$ and set $c_1:=\mu_Z(p_1)$. Then,  we have (i): $\mu^{-1}_{Z}(c_1)$ is a $S$-invariant connected hypersurface, (ii): ${\mu}^{-1}_Z(c_1)$ consists of singular orbits of $S\curvearrowright M$, namely, $\mu_Z^{-1}(c_1)\cap M_{reg(S)}=\{\phi\}$ (otherwise, we have $S\cdot p=\mu_Z^{-1}(c_1)\varsupsetneq S\cdot p_1$ for some $p\in \mu_Z^{-1}(c_1)\cap M_{pri(S)}$, a contradiction).  
In particular, ${\rm dim}(\mu_Z^{-1}(c_1)/S)\geq 1$ and there are infinitely many singular orbits. This contradicts to the general fact that the cohomogeneity one action admits at most two singular orbit.  Therefore, we have $M_{reg(\mu_Z)}\cap M_{sing(S)}=\{\phi\}$. Notice that this implies $M_{reg(\mu_Z)}=M_{pri(S)}$ and $M_{cri(\mu_Z)}=M_{sing(S)}$.  

Since $Z\simeq S^1$, the image $\mu_Z(M)$ is homeomorphic to a closed interval $I$ in $\mathbb{R}\simeq \mathfrak{z^*}$ by the convexity theorem for the moment map (see Corollary IV.4.5 in \cite{Audin}).  Since $M_{cri(\mu_Z)}=M_{sing(S)}=S\cdot p_1\sqcup S\cdot p_2$ and $\mu_Z(S\cdot p_i)\equiv c_i$ for $i=1,2$ by Lemma \ref{lem3.1}, there are exactly two critical values, the maximum  and the minimum  of $\mu_Z$, and we see $S\cdot p_i=\mu_Z^{-1}(c_{i})$. Therefore, the map $M/S\rightarrow \mu_Z(M)$ via $S\cdot p\mapsto \mu_Z(p)$ defines a homeomorphism. This proves (b).

 Finally, we show (c). By (a), $Z$ is an abelian group, and hence, $Z$ acts on each level set $\mu_Z^{-1}(c)$. For $p\in \mu_Z^{-1}(c)$, we denote the stabilizer subgroup at $p$ of the action $Z\curvearrowright M$ and its Lie subalgebra by $Z_p$ and $\mathfrak{z}_p$, respectively. It is a general fact for the moment map that ${\rm Im}d\mu_Z(p)$ coincides with the annihilator in $\mathfrak{z^*}$ of $\mathfrak{z}_p$ (see Proposition III.2.2 in \cite{Audin}). Thus,    
 \begin{itemize}
\item if $p\in M_{cri(\mu_Z)}$, then we see $\mathfrak{z}_p=\mathfrak{z}$ since $Z\simeq S^1$, and hence, $p$ is a fixed point of the $Z$-action, and
\item  if $p\in M_{reg(\mu_Z)}$, then the action $Z\curvearrowright \mu_Z^{-1}(c)$, where $c=\mu_Z(p)$, is locally free, i.e., the stabilizer subgroup $Z_p$ of $Z\curvearrowright \mu_Z^{-1}(c)$ at $p$ is discrete.  
\end{itemize}
We claim that $Z_p=Z_q$ for any $p,q\in M_{reg(\mu_Z)}=M_{pri(S)}$.

 Setting $S':=SZ=SZ_G(S)^0$, we see $S'$ is a subgroup of $G$ and $S'\cdot p=S\cdot p$ for any $p\in M$ since $Z$ acts on each $S$-orbit by (b). Note that we have $M_{pri(S')}=M_{pri(S)}$. The $S'$-action induces an action of the subgroup $Z$ on the homogeneous space $S'/S'_p\simeq S'\cdot p$.   Since $S'\cdot p=S\cdot p=\mu_Z^{-1}(c)$ for any $p\in M$ by (b), the action $Z\curvearrowright \mu_Z^{-1}(c)$ is equivariant to $Z\curvearrowright S'/S'_p$. Take distinct points $p,q \in M_{pri(S')}$. Then, $h\in Z_p$ if and only if $h\in S'_p$. Since $S'_q$ is conjugate to $S'_p$ in $S'$, there exists an element $s'\in S'$ such that $s'h(s')^{-1}\in S'_q$. On the other hand, because $Z$ is abelian, $h$ commutes with any element in $S'=SZ$. Therefore, we see $h=s'h(s')^{-1}\in S'_q$, i.e., $h\in Z_q$. This implies $Z_p=Z_q$ for any $p, q\in M_{pri(S')}=M_{pri(S)}=M_{reg(\mu_Z)}$ as claimed. 
 
 In particular, an element $h\in Z_p$ for $p\in M_{reg(\mu_Z)}$ fixes every point in $M_{reg(\mu_Z)}$, and hence, whole $M$.
On the other hand, because $G\curvearrowright M$ is effective, $Z\curvearrowright M$ is also an effective action. Therefore, $Z_p=\{e\}$ for $p\in M_{reg(\mu_Z)}$ and the action $Z\curvearrowright \mu_Z^{-1}(c)$ is free for any regular value $c$.
 
 Take an element $v \in \mathfrak{z}$ so that $|\tilde{v}_p|_g=1$ for some point $p\in \mu_Z^{-1}(c)$, where $c$ is a regular value. Since $S$ acts on $\mu_Z^{-1}(c)$ transitively, there exists $s\in S$ such that $q=sp$ for any $q\in \mu_Z^{-1}(c)$. Then,  $|\tilde{v}_q|_g=|\widetilde{({\rm Ad}(s^{-1})v)}_p|_g=|\tilde{v}_p|_g=1$ because the metric is $G$-invariant and $v\in \mathfrak{z}=\mathfrak{z_g(s)}$.  On the other hand,  we have $T_q\mu_Z^{-1}(c)=\{\tilde{X}_q;\ X\in \fs\}$ by (b). Then, we see
   \begin{align*}
   g(J\tilde{v}, \tilde X)=\omega(\tilde{v}, \tilde X)=d\mu^{v}_Z(\tilde{X})=\tilde{X}\mu^{v}_Z=0
   \end{align*}
   for any $X\in \fs$. Therefore, $J\tilde{v}$ is a unit normal vector  field of $\mu_Z^{-1}(c)$, i.e., $\tilde{v}$ is the Reeb vector field. Moreover, for any $q=sp$, we have $Z\cdot q=Z\cdot sp=s(Z\cdot p)$. Thus, $Z$-orbits in $\mu_Z^{-1}(c)$ are mutually isometric.
  \end{proof}

By Lemma \ref{lem3.5} (a), we may assume $Z_G(S)^0\simeq S^1$. Consider the moment map $\mu_{{Z}}: M\rightarrow \mathfrak{z_g(s)}^*$ of the $Z_G(S)^0$-action.  In the following, we fix an inner product $\langle, \rangle$ on $\mathfrak{z_g(s)}$ and identify $\mu_Z$ with a real function $\mu_Z^{X_0}: M\rightarrow \mathbb{R}$ for a unit vector $X_0\in \mathfrak{z_g(s)^*}$. Since $\tilde{X_0}$ generates an $S^1$-orbit, it turns out that the hamiltonian $\mu_Z^{X_0}$ is a Morse-Bott function (see Theorem IV.2.3 in \cite{Audin}). In our setting, much more is true:

Recall that a smooth function $f: M\rightarrow \mathbb{R}$ on a Riemannian manifold $M$ is called {\it transnormal} if $|\nabla f|^2_g=a\circ f$ for some smooth function $a: I\rightarrow \mathbb{R}$, where $I=f(M)$. Furthermore, a transnormal function $f$ is called {\it isoparametric} if there exists a continuous function $b: I\rightarrow \mathbb{R}$ such that $\Delta f=b\circ f$. A regular level set of an isoparametric function is so called the {\it isoparametric hypersurface}. 

\begin{proposition}\label{prop3.6}
$\mu_Z:M\rightarrow \mathfrak{z_g(s)^*}\simeq \mathbb{R}$ is an isoparametric function.
\end{proposition}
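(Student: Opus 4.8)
The plan is to show that the two quantities appearing in the definitions, namely $|\nabla\mu_Z|^2_g$ and $\Delta\mu_Z$, are both $S$-invariant functions on $M$, and then to descend them to functions of $\mu_Z$ alone by means of Lemma \ref{lem3.5} (b). Writing $\mu_Z=\mu_Z^{X_0}$ for the unit vector $X_0\in\mathfrak{z_g(s)}$, the moment map equation $d\mu_Z^{X_0}=i_{\tilde{X}_0}\omega$ together with $\omega(\,\cdot\,,\,\cdot\,)=g(J\,\cdot\,,\,\cdot\,)$ gives $\nabla\mu_Z^{X_0}=J\tilde{X}_0$, as in Remark \ref{rmk2.8}. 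Hence
\begin{align*}
|\nabla\mu_Z|^2_g=|J\tilde{X}_0|^2_g=|\tilde{X}_0|^2_g,\qquad \Delta\mu_Z=-{\rm div}(J\tilde{X}_0).
\end{align*}
Since $X_0\in\mathfrak{z_g(s)}$ and $S$ is connected, ${\rm Ad}(s)X_0=X_0$ for every $s\in S$, so the fundamental vector field $\tilde{X}_0$ is $S$-invariant; as $g$ and $J$ are $G$-invariant, both $|\tilde{X}_0|^2_g$ and ${\rm div}(J\tilde{X}_0)$ are $S$-invariant. Equivalently, $\mu_Z$ itself is $S$-invariant (Lemma \ref{lem3.1}) and $g$ is $S$-invariant, so $|\nabla\mu_Z|^2_g$ and $\Delta\mu_Z$ are automatically $S$-invariant functions.

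By Lemma \ref{lem3.5} (b) the level sets of $\mu_Z$ are exactly the $S$-orbits, and $S\cdot p\mapsto\mu_Z(p)$ is a homeomorphism $M/S\to I:=\mu_Z(M)$; thus every $S$-invariant function on $M$ factors through $\mu_Z$. This produces $a,b\colon I\to\mathbb{R}$ with $|\nabla\mu_Z|^2_g=a\circ\mu_Z$ and $\Delta\mu_Z=b\circ\mu_Z$, and only their regularity remains. Since $M$ is compact and $\mu_Z\colon M\to I$ is a continuous closed surjection with connected fibres, every continuous $S$-invariant function descends continuously to $I$; in particular $b$ is continuous (as required for the isoparametric condition) and $a$ is continuous, with $a(c_i)=0$ at the two critical values. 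On the interior $I^{\circ}$ the restriction $\mu_Z\colon M_{reg(\mu_Z)}\to I^{\circ}$ is a surjective submersion with connected fibres (Lemma \ref{lem3.5} (b),(c)), so the smooth $S$-invariant function $|\nabla\mu_Z|^2_g$ descends to a smooth function there; hence $a$ is smooth on $I^{\circ}$.

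The \textbf{main obstacle} is the smoothness of $a$ at the two endpoints $c_1,c_2$ of $I$, corresponding to the singular orbits $S\cdot p_1,S\cdot p_2$. To treat this I would take a unit-speed geodesic $\gamma\colon[0,L]\to M$ normal to all orbits and joining the two singular orbits; such a normal geodesic exists for a cohomogeneity one action (see \cite{BCO}). Setting $\psi:=\mu_Z\circ\gamma$, the fact that $\nabla\mu_Z$ is normal to the orbits and $\dot\gamma$ spans the one-dimensional normal line forces $\nabla\mu_Z|_{\gamma(t)}=\psi'(t)\dot\gamma(t)$, whence $a(\psi(t))=\psi'(t)^2$. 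Now the critical set of $\mu_Z$ coincides with the fixed-point set of the Hamiltonian circle action $Z_G(S)^0\simeq S^1$ (see the proof of Lemma \ref{lem3.5} (c)); by the equivariant Darboux normal form, $\mu_Z^{X_0}$ is Morse--Bott near each $S\cdot p_i$ and nondegenerate in the normal direction, while the reflection symmetry of the normal geodesic at a singular orbit forces $\psi$ to be a smooth \emph{even} function of the arclength about each endpoint. Writing $\psi(t)-c_i=t^2h(t^2)$ with $h$ smooth and $h(0)\neq 0$, one inverts $s=\psi(t)$ in the variable $t^2$ and finds that $a(s)=\psi'(t)^2$ is smooth up to $s=c_i$. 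This yields smoothness of $a$ on all of $I$ and finishes the proof. I expect the verification of the even/Morse--Bott behaviour of $\psi$ at the singular orbits, rather than the global invariance argument, to be the only delicate point.
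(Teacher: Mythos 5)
Your argument is correct, but it departs from the paper's proof at both of the points that carry the real weight. For the transnormal condition the paper never introduces a normal geodesic: since $\mathfrak{z_g(s)}$ is one-dimensional and $\nabla\mu_Z^X=J\tilde{X}$, it identifies $|\nabla\mu_Z|^2_g$ with a nonzero constant multiple of $V^2$, where $V(p)=vol_g(Z_G(S)^0\cdot p)$ is the orbit-volume function of the circle action as in \eqref{eq3}, and then appeals to the fact (Proposition 1 in \cite{Pacini}) that $V^2$ extends smoothly over the fixed-point set; this is what controls $a$ near the two critical values, after which the paper asserts that $a$ extends smoothly past $\partial I$ with "one can easily verify". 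Your normal-geodesic analysis --- evenness of $\psi=\mu_Z\circ\gamma$ about a singular orbit via the transitivity of the slice representation, Morse--Bott nondegeneracy of the circle moment map transverse to its fixed set, and the inversion of $s-c_i=uh(u)$ in $u=t^2$ --- is a legitimate and more explicit substitute; indeed it is essentially the verification the paper leaves implicit, so be sure to justify the evenness of $\psi$ exactly as you indicate (the isotropy group at $p_i$ carries $\dot\gamma(0)$ to $-\dot\gamma(0)$, and $\mu_Z$ is constant on $S$-orbits). For the isoparametric condition the routes also differ: you show directly that $\Delta\mu_Z=-{\rm div}(J\tilde{X}_0)$ is $S$-invariant and hence descends to a continuous function of $\mu_Z$ through the homeomorphism $M/S\simeq I$ of Lemma \ref{lem3.5} (b), whereas the paper observes that the regular level sets are homogeneous hypersurfaces, hence have constant mean curvature, and invokes Proposition 2.9.1 in \cite{BCO} for transnormal functions with CMC regular level sets. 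Your version is more elementary and self-contained; the paper's version buys the geometrically meaningful identification of $a$ with the squared orbit volume of the circle action, which feeds into the Hsiang--Lawson weight used elsewhere in the paper.
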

\begin{proof}
First, we shall show $\mu_Z$ is transnormal. We set $Z:=Z_G(S)^0$ and $\mathfrak{z}:=\mathfrak{z_g(s)^*}$.  By Lemma \ref{lem3.5} (c), $Z$ acts on $M_{reg(\mu_Z)}$ freely, and hence, each orbit $\mathcal{O}_p:=Z\cdot p$ through $p\in M_{reg(\mu_Z)}$ is a principal $Z$-orbit. Moreover, since the $Z$-action fixes every point in $M_{cri(\mu_Z)}$ (see the proof of Lemma \ref{lem3.5} (c)), we have $M_{reg(\mu_Z)}=M_{pri(Z)}$ and  $M_{cri(\mu_Z)}=M_{sing(Z)}$.

Thus, we define the volume function on $M_{pri(Z)}$ as well as \eqref{eq3} by
\begin{align}\label{eq27}
V(p):={vol}_g(\mathcal{O}_p)=|\nu|_g(p)\int_{Z}\nu.
\end{align}
It is known that  $V$ is a smooth function on $M_{pri(Z)}$. Moreover, $V$ can be  extended continuously to the singular sets $M_{sing(Z)}$ of the $Z$-action as $V(p)=0$ for $p\in M_{sing(Z)}$ and $V^2$ is a smooth function on $M$ (see Proposition 1 in \cite{Pacini}).

 Since $\mathcal{O}_p\simeq Z$ for any $p\in M_{pri(Z)}$, we have an embedding $\psi_p: Z\rightarrow M$ so that $\psi_p(Z)=\mathcal{O}_p$ for each $p\in M_{pri(Z)}$. Then, the induced metric $\psi_p^*g$ is left invariant, and hence, it defines another inner product $\langle,\rangle_p$ on $\mathfrak{z}$ so that $\langle X, Y\rangle_p=g(\tilde{X}_p, \tilde{Y}_p)$ for any $X,Y\in \mathfrak{z}$. Because $\mathfrak{z}\simeq \mathbb{R}$, there exists a positive constant $A(p)$ depending on $p$ such that  $\langle, \rangle_p=A(p)\langle, \rangle$.  
Since $\nabla\mu^X_Z=J\tilde{X}$ for any $X\in \fk$, we see
\begin{align}\label{eq28}
\langle X,X\rangle_p=|\tilde{X}|^2_g(p)=|\nabla\mu^X_Z|^2_g(p)=|\nabla\mu_Z|^2_g(p)\cdot \langle X,X\rangle,
\end{align}
and hence, $A(p)=|\nabla\mu_Z|^2_g(p)$.

Define a left invariant metric $h$ on $Z$ by using $\langle, \rangle$ so that $\psi^*_pg=A(p)h$ on $Z$. Then, the volume elements $dv_{\psi^*_pg}$ and $dv_h$ on $Z$ defined by $\psi^*_pg$ and $h$, respectively,  satisfy $dv_{\psi^*_pg}=A(p)^{1/2}dv_h$. Since $V(p)=\int_Zdv_{\psi^*_pg}$, we see
\begin{align*}
A(p)=V(p)^2\cdot (const.).
\end{align*}
for any $p\in M_{pri(Z)}$, where the constant is non-zero.   Therefore, $A$ can be extended to a smooth function on $M$ as $A(p)=0$ for $p\in M_{sing(Z)}$ and satisfies $A(p)=|\nabla\mu_Z|^2_g(p)$ for any $p\in M$. 

By Lemma \ref{lem3.5} (c), the $Z$-orbits contained in ${\mu}_{Z}^{-1}(c)$ are mutually isometric for each regular value $c\in \mathfrak{z}^*$. Thus,  we obtain a well-defined function $a: I=\mu_Z(M)\rightarrow \mathbb{R}$ by $a(c):=A(p)=V(p)^2$ for $p\in {\mu}_{Z}^{-1}(c)$ so that $a$ satisfies $|\nabla \mu_Z|^2_g=a\circ \mu_Z$. Since $M_{reg(\mu_Z)}=M_{pri(S)}$ (see Lemma \ref{lem3.5} (b)) and $\mu_Z|_{M_{reg(\mu_Z)}}: M_{reg(\mu_Z)}\rightarrow \mathfrak{z}^*$ is a submersion, we see $a$ is smooth on $\mu_Z(M_{reg(\mu_Z)})=I^i$, the interior of $I=\mu_Z(M)$. Moreover, for a critical value $c_i\in \partial I$,  $\mu_Z^{-1}(c_i)$ coincides with a singular orbit, and hence, one can easily verify that $a$ is extended to a smooth function on an open interval $I'$ containing $I$ since $A$ is smooth on $M$. Namely, $a$ is differentiable on the boundary. Therefore, $\mu_Z$ is a transnormal function. 

By Lemma \ref{lem3.5} (b), each regular level set of the transnormal function $\mu_Z$ coincides with a principal $S$-orbit. In particular, each level set has constant mean curvature. Then, $\mu_Z$ is isoparametric by Proposition 2.9.1 in \cite{BCO}.
\end{proof}

It is known that any homogeneous hypersurface in a complete Riemannian manifold is isoparametric. In our setting, by Lemma \ref{lem3.5} (b), ${\mu}_Z$ gives an isoparametric function for the $S$-orbits.

Now, we further assume $M$ is a closed K\"ahler-Einstein manifold of positive Ricci curvature and $S$ is a connected closed subgroup of $G={\rm Aut}(M,\omega, J)^0$. Suppose $S$ acts on $M$ as  a cohomogeneity one action and $Z_G(S)$  is not discrete. Then, we have the canonical moment map of the $Z_G(S)^0$-action $\tilde{\mu}_Z: M\rightarrow \mathfrak{z_g(s)}^*\simeq \mathbb{R}$, and  $\tilde{\mu}_Z$ is regarded as an isoparametric function by Proposition \ref{prop3.6}. In fact, $\tilde{\mu}_Z$ satisfies $\Delta \tilde{\mu}_Z=2C\tilde{\mu}_Z$ (see Proposition \ref{prop2.7} and Remark \ref{rmk2.8}). Moreover, since $M$ is closed, we have
\begin{align*}
\int_M\tilde{\mu}_{Z}\omega^n=0.
\end{align*}
This implies $0$ is an interior point in the closed interval $I=\tilde{\mu}_Z(M)$. Because there is no critical value in the interior $I^i$ (see the proof of Lemma \ref{lem3.5} (b)), $0$ is a regular value of $\tilde{\mu}_Z$. Therefore, the $0$-level set $\tilde{\mu}_Z^{-1}(0)$ is a homogeneous hypersurface in $M$ and $S$ acts on $\tilde{\mu}_Z^{-1}(0)$ transitively by Lemma \ref{lem3.5} (b).  Denote the stabilizer subgroup of $S$ at $p\in \tilde{\mu}^{-1}_Z(0)$ by $S_p$. Then, $\tilde{\mu}^{-1}_Z(0)\simeq S/S_p$. It is easy to verify that $S$ also acts on the quotient space $M_0=\tilde{\mu}^{-1}_Z(0)/Z_G(S)^0$ transitively, and the stabilizer subgroup of $S$ at $[p]\in M_0$ is given by $Z_G(S)^0S_p$. Note that $S$ acts on $M_0$ as holomorphic isometries by Lemma \ref{lem3.3}.  Moreover,  by Lemma \ref{lem3.5} (c) and Proposition \ref{prop2.9} (see also Remark \ref{rmk2.10} (1)), $(M_0, \omega_0, J_0)$ is a K\"ahler-Einstein manifold of positive Ricci curvature.

Summing up our arguments, we prove the following:
\begin{theorem}\label{thm3.7}
Let $(M,\omega, J)$ be a closed K\"ahler-Einstein manifold of positive Ricci curvature. Suppose a connected compact subgroup $S$ of $G={\rm Aut}(M, \omega, J)^0$ acts on $M$ as a cohomogeneity one action. Furthermore, we assume the centralizer $Z_G(S)$ of $S$ in $G$ is not discrete. Then, the $Z_G(S)^0$-action is a circle action and the canonical moment map $\tilde{\mu}_Z: M\rightarrow \mathfrak{z_g(s)^*}\simeq \mathbb{R}$ of the $Z_G(S)^0$-action is an isoparametric function for the $S$-orbits. 

Moreover, $0$ is a regular value of $\tilde{\mu}_Z$ and $Z_G(S)^0$ acts on $\tilde{\mu}^{-1}_Z(0)$ freely, and the K\"ahler quotient space $(M_0=\tilde{\mu}^{-1}_Z(0)/Z_G(S)^0, \omega_0, J_0)$ is a compact homogeneous K\"ahler-Einstein manifold of positive Ricci curvature which is diffeomorphic to $S/Z_G(S)^0S_p$, where $S_p$ is the stabilizer subgroup of $S$ at $p\in \tilde{\mu}^{-1}_Z(0)$. 
\end{theorem}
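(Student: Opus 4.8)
The plan is to assemble the statement from the results established earlier in this section, since each assertion has been prepared by one of the preceding lemmas. First I would dispatch the two claims of the first paragraph. The hypothesis that $G$ is compact forces $Z_G(S)$ to be compact, and since it is assumed non-discrete, Lemma \ref{lem3.5}(a) gives $\dim Z_G(S)=1$, so $Z_G(S)^0\simeq S^1$ and the $Z_G(S)^0$-action is a circle action. Because $M$ is K\"ahler-Einstein of positive Ricci curvature we may write $\rho=C\omega$ with $C>0$ (taking $f\equiv 0$ in the hypothesis of Proposition \ref{prop2.7}), so Proposition \ref{prop2.7} applies and produces the canonical moment map $\tilde{\mu}_Z$ of the $Z_G(S)^0$-action. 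Proposition \ref{prop3.6}, applied to $\tilde{\mu}_Z$, then shows $\tilde{\mu}_Z$ is isoparametric, and by Lemma \ref{lem3.5}(b) its regular level sets are exactly the principal $S$-orbits, which is the asserted ``isoparametric for the $S$-orbits''.

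Next I would verify that $0$ is a regular value. The key normalization is that $\tilde{\mu}_Z$ is canonical: by Remark \ref{rmk2.8} it is an eigenfunction, $\Delta\tilde{\mu}_Z=2C\tilde{\mu}_Z$, and since $M$ is closed this forces $\int_M\tilde{\mu}_Z\,\omega^n=0$. As $\tilde{\mu}_Z$ is non-constant (the action is cohomogeneity one), it must be positive somewhere and negative somewhere, so $0$ lies in the interior of the interval $I=\tilde{\mu}_Z(M)$. By the analysis in the proof of Lemma \ref{lem3.5}(b) the only critical values of $\tilde{\mu}_Z$ are the two endpoints of $I$, the images of the two singular $S$-orbits, so an interior value is regular; thus $0$ is a regular value. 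Freeness of $Z_G(S)^0$ on $\tilde{\mu}_Z^{-1}(0)$ is then immediate from Lemma \ref{lem3.5}(c), and the K\"ahler reduction of Section \ref{Sec2} applies to give a smooth K\"ahler manifold $M_0=\tilde{\mu}_Z^{-1}(0)/Z_G(S)^0$.

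For the structure of $M_0$ I would argue as follows. By Lemma \ref{lem3.5}(b), $\tilde{\mu}_Z^{-1}(0)$ is a single principal $S$-orbit, hence diffeomorphic to $S/S_p$. The circle $Z_G(S)^0$ acts freely on it and commutes with $S$, so the $S$-action descends to $M_0$; it is transitive because $S$ is already transitive on $\tilde{\mu}_Z^{-1}(0)$ and $\pi$ is $S$-equivariant, and it acts by holomorphic isometries by Lemma \ref{lem3.3}. Computing the isotropy of $[p]$ yields the homogeneous presentation $M_0\simeq S/Z_G(S)^0S_p$. Finally, for the K\"ahler-Einstein property: by Lemma \ref{lem3.5}(c) the $Z_G(S)^0$-orbits inside $\tilde{\mu}_Z^{-1}(0)$ are mutually isometric, hence all have the same volume, which by \eqref{eq3} is equivalent to $|\nu|$ being constant on $\tilde{\mu}_Z^{-1}(0)$. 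Invoking Proposition \ref{prop2.9} together with Remark \ref{rmk2.10}(i) then gives $\rho_0=C\omega_0$ with the same $C>0$, so $(M_0,\omega_0,J_0)$ is K\"ahler-Einstein of positive Ricci curvature.

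The bulk of the content sits in the lemmas already proved, so the assembly itself is routine; the one place that genuinely uses more than bookkeeping is the K\"ahler-Einstein conclusion. I expect this to be the main point to get right: by Futaki's result the quotient of a Fano manifold is in general only Fano, not K\"ahler-Einstein, and promoting it rests precisely on the constancy of the orbit volume, i.e. on the ``isometric Reeb flow'' part of Lemma \ref{lem3.5}(c), via Remark \ref{rmk2.10}(i). Keeping track of why the \emph{canonical} (rather than an arbitrary) moment map is needed --- it is what forces $0$ into the interior of $I$ and hence to be a regular value --- is the other step I would be careful to state explicitly.
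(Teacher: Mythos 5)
Your proposal is correct and follows essentially the same route as the paper: Lemma \ref{lem3.5}(a) for the circle action, Proposition \ref{prop2.7} with $f\equiv 0$ and Proposition \ref{prop3.6} for the canonical isoparametric moment map, the eigenfunction identity $\Delta\tilde{\mu}_Z=2C\tilde{\mu}_Z$ forcing $\int_M\tilde{\mu}_Z\omega^n=0$ and hence $0$ to be an interior (regular) value, Lemma \ref{lem3.5}(b),(c) and Lemma \ref{lem3.3} for the homogeneous presentation $S/Z_G(S)^0S_p$, and Remark \ref{rmk2.10}(i) with Proposition \ref{prop2.9} for the K\"ahler--Einstein conclusion via constancy of the orbit volumes. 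Your closing remarks correctly identify the two genuinely non-routine points (the role of the canonical normalization and of the isometric Reeb flow), which is exactly where the paper places the emphasis as well.
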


Since the $Z_G(S)^0$-orbits contained in $\tilde{\mu}^{-1}_Z(0)$ are mutually isometric, the Hsiang-Lawson metric $g_{HL}$ coincides with a constant multiple of the reduced K\"ahler metric $g_0$. Therefore, combining Theorem \ref{thm3.7} with Theorem \ref{thm2.11} and Dong's result in \cite{Dong}, we obtain
\begin{corollary}\label{cor3.8}
Suppose the same assumptions described in Theorem \ref{thm3.7}. Then, any minimal (resp. Hamiltonian minimal) Lagrangian submanifold $L_0$ in the compact homogeneous K\"ahler-Einstein manifold $(M_0, \omega_0, J_0, g_0)$ yields a $Z_G(S)^0$-invariant minimal (resp. Hamiltonian minimal) Lagrangian submanifold in $(M, \omega, J, g)$ as the pre-image $\pi^{-1}(L_0)$ of the fibration $\pi: \tilde{\mu}^{-1}(0)\rightarrow M_0$. Conversely, any $Z_G(S)^0$-invariant minimal Lagrangian submanifold in $(M, \omega, J, g)$ is obtained in this way.
\end{corollary}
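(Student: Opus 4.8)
The plan is to deduce the corollary by combining the reduction structure of Theorem \ref{thm3.7} with the Hsiang--Lawson type correspondences of Theorem \ref{thm2.11} and of Dong \cite{Dong}, after making two identifications that trivialize the conformal factors appearing in those results. First I would note that since $(M,\omega,J)$ is K\"ahler--Einstein with positive Ricci curvature, its Ricci form satisfies $\rho=C\omega$ with $C>0$; in the normalization $\rho=C\omega+ndd^cf$ this forces $dd^cf=0$, so $f$ is pluriharmonic and hence constant on the compact manifold $M$. Consequently the globally conformal K\"ahler metric $\tilde g=e^{2f}g$ agrees with $g$ up to a positive constant, and minimality with respect to $\tilde g$ is the same condition as minimality with respect to $g$. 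Second, by Theorem \ref{thm3.7} the quotient $(M_0,\omega_0,J_0)$ is again K\"ahler--Einstein, and by Lemma \ref{lem3.5}(c) the $Z_G(S)^0$-orbits contained in $\tilde\mu^{-1}_Z(0)$ are mutually isometric; hence $vol_{\tilde g}(\mathcal{O}_p)$ is constant along the level set and the Hsiang--Lawson metric $\tilde g_{HL}=vol_{\tilde g}(\mathcal{O}_p)^{2/(n-l)}\tilde g_c$ (with $l=\dim Z_G(S)^0=1$) is a positive constant multiple of the reduced K\"ahler metric $g_0$. Since minimality and Hamiltonian minimality are invariant under scaling the ambient metric by a positive constant, the conclusions of Theorem \ref{thm2.11}(b) and of Dong's theorem reduce to statements relating minimality (resp. Hamiltonian minimality) for $g$ on $M$ with that for $g_0$ on $M_0$.

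With these identifications in hand the forward implication is bookkeeping. Let $L_0\subset M_0$ be minimal (resp. Hamiltonian minimal) with respect to $g_0$, hence with respect to $\tilde g_{HL}$. Set $L:=\pi^{-1}(L_0)\subset \tilde\mu^{-1}_Z(0)$; by construction $L$ is $Z_G(S)^0$-invariant, and as explained in Section \ref{Sec2} the relation $\pi^*\omega_0=\iota^*\omega$ together with $\dim_{\mathbb{R}}L=\dim_{\mathbb{C}}M=n$ shows that $L$ is a Lagrangian submanifold of $M$ whose reduced Lagrangian is exactly $L_0$. Applying Theorem \ref{thm2.11}(b)---legitimate since $0$ is a regular value of $\tilde\mu_Z$ and $Z_G(S)^0$ acts freely on $\tilde\mu^{-1}_Z(0)$ by Theorem \ref{thm3.7}---gives $\pi^*\tilde\beta_{\tilde H_{HL}}=\tilde\alpha_{\tilde H}$, so that $L_0$ minimal forces $L$ minimal with respect to $\tilde g=g$. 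For the Hamiltonian minimal case one invokes Dong's analogous result in \cite{Dong} on the level set $\tilde\mu^{-1}_Z(0)$ in exactly the same way.

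For the converse, which is asserted only in the minimal case, let $L\subset M$ be any $Z_G(S)^0$-invariant minimal Lagrangian submanifold with respect to $g$. Because $g=\tilde g$ up to a constant, $L$ is minimal with respect to $\tilde g$, so $\tilde\alpha_{\tilde H}=0$ is in particular exact, and Theorem \ref{thm2.11}(a) forces $\phi(L)\subset\tilde\mu^{-1}_Z(0)$. Hence $L$ fills out entire $Z_G(S)^0$-orbits inside the $0$-level set and equals $\pi^{-1}(L_0)$ for $L_0:=\pi(L)=L/Z_G(S)^0$, while Theorem \ref{thm2.11}(b) shows $L_0$ is minimal with respect to $\tilde g_{HL}$, equivalently with respect to $g_0$. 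Thus every such $L$ arises as a pre-image. The only genuinely delicate points are the two identifications of the first paragraph---the vanishing of the conformal factor $f$ and the constancy of the orbit volume on $\tilde\mu^{-1}_Z(0)$---both supplied by the K\"ahler--Einstein hypothesis through Theorem \ref{thm3.7} and Lemma \ref{lem3.5}(c); once these are in place, the statement is essentially a translation of the reduction theorems already proved.
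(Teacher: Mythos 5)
Your proposal is correct and follows essentially the same route as the paper, which likewise deduces the corollary by observing that the K\"ahler--Einstein hypothesis makes $\tilde g$ a constant multiple of $g$, that the mutually isometric $Z_G(S)^0$-orbits in $\tilde\mu_Z^{-1}(0)$ make the Hsiang--Lawson metric a constant multiple of $g_0$, and then invoking Theorem \ref{thm2.11} (parts (a) and (b)) together with Dong's result. The only difference is that you spell out the bookkeeping which the paper leaves as a one-sentence remark.
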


It is known that a compact homogeneous K\"ahler-Einstein manifold of positive Ricci curvature is simply connected and is obtained by a K\"ahler product of generalized flag manifolds (see Section 8 in \cite{Besse}). Thus, the quotient space obtained in Theorem \ref{thm3.7} may be well-understood.  For instance, a totally geodesic (and hence, automatically minimal) Lagrangian submanifold in a generalized flag manifold $M_0$ is obtained by a real form of $M_0$. See \cite{GP} for the construction of the real forms.

\begin{remark}\label{rmk3.9}
{\rm We remark other properties of the K\"ahler reduction given in Theorem \ref{thm3.7}. 

(i) $\pi: \tilde{\mu}^{-1}(0)\rightarrow M_0$ preserves the homogeneity of a submanifold in some sense. Namely, if $L_0$ is a homogeneous submanifold in $M_0$ obtained by an orbit of a connected subgroup $S'$ of $S$, then so is the pre-image $L=\pi^{-1}(L_0)$ in $M$. In fact, $S'Z_G(S)$ acts on $L$ transitively. 

(ii) Since the $Z_G(S)^0$-orbits contained in $\tilde{\mu}^{-1}_Z(0)$ are mutually isometric, we have $vol_g(L)=(const)\cdot vol_{g_0}(L_0)$ for any $Z_G(S)^0$-invariant Lagrangian submanifold $L$. Thus, applying Lemma 3.1.1 in \cite{LR}, we see that if $L$ is Hamiltonian stable with respect to $g$, then so is $L_0$ with respect to $g_0$.}
\end{remark} 

\subsection{Examples}\label{subsec3.3}

The condition (c) in Lemma \ref{lem3.5} implies the situation is somewhat restrictive.  So far, we know some classification results of real hypersurfaces with isometric Reeb flow in Hermitian symmetric spaces (\cite{BS1}, \cite{BS2} and \cite{Okumura}). These examples are obtained by a Hermann action of cohomogeneity one.  

Let $(G, S)$ and $(G, S')$ be Hermitian symmetric paris of compact type (see \cite{Hel} for the details). Then, $S$ acts on the Hermitian symmetric space $M=G/S'$ as holomorphic isometries, and we call such an action the {\it Hermann action of Hermitian type}. 
 If $G/S'$ is an irreducible Hermitian symmetric space and $S\curvearrowright G/S'$ is cohomogeneity one, then  these actions are preciously given in Table 1. 
 
 Since $(G, S)$ is a Hermitian symmetric pair, $S$ has a 1-dimensional center $C(S)$. Thus, the centralizer $Z_G(S)$ is not discrete because $C(S)\subseteq Z_G(S)$. 
 
 Suppose $S\curvearrowright G/S'$ is cohomogeneity one. Then, ${\rm dim}Z_G(S)=1$ by Lemma \ref{lem3.5} (a), and hence, $Z_G(S)^0=C(S)^0$.  Moreover, we may assume $G={\rm Aut}(M,\omega, J)^0$.
Thus, $C(S)^0$ acts on $\tilde{\mu}_Z^{-1}(0)$ freely by Lemma \ref{lem3.5} (c). In particular, we can apply  Theorem \ref{thm3.7} and Corollary \ref{cor3.8}.
 
\begin{table}[htb]
\center{
  \begin{tabular}{|c|c|}\hline
$S$ & {Hermitian symmetric space $M=G/S'$} \\ \hline
 $S(U(k+1)\times U(n-k))$ & $SU(n+1)/S(U(n)\times U(1))\simeq\mathbb{C}P^{n}$   \\ \hline
  $S(U(n)\times U(1))$ & $SU(n+1)/S(U(k+1)\times U(n-k))\simeq \tilde{G}_{k+1}(\mathbb{C}^{n+1})$ \\ \hline
$U(n)$ & $SO(2n)/SO(2)\times SO(2n-2)\simeq \tilde{G}_2(\mathbb{R}^{2n})$ \\ \hline
$SO(2)\times SO(2n-2)$ &  $SO(2n)/U(n)$ \\ \hline
 \end{tabular}
 \caption {Hermann actions of Hermitian type of cohomogeneity one acting on an irreducible Hermitian symmetric space, where  $0\leq k \leq n-1$.}
}
\end{table}

We shall give further details of the K\"ahler quotients for some cases in which we obtain several examples of minimal Lagrangian submanifolds:
\begin{example}\label{ex3.10}
{\rm 
 Let $M$ be the complex projective space $\mathbb{C}P^n$ of constant holomorphic curvature 4. By the result of Okumura \cite{Okumura}, a real hypersurface $\overline{M}$ in $\mathbb{C}P^n$ admits an isometric Reeb flow if and only if  $\overline{M}$ is an open part of a tube around a totally geodesic $\mathbb{C}P^k\subset \mathbb{C}P^n$ for $k=0,\ldots, n-1$. The tube is a homogeneous hypersurface in $\mathbb{C}P^n$, and $S=S(U(k+1)\times U(n-k))$ acts on it transitively.
 The center is given by 
 \begin{align*}
 C(S)=\{{\rm diag}(e^{\sqrt{-1}(n-k)\theta_1}Id_{k+1}, e^{-\sqrt{-1}(k+1)\theta_1}Id_{n-k}); \theta_1\in \mathbb{R}\}\simeq S^1.
\end{align*}

We further restrict our attention to the case when $k=0$, i.e., $S$ is the stabilizer subgroup $S'=S(U(1)\times U(n))$. In this case, the regular orbit $\overline{M}$ is a geodesic hypersphere in $\mathbb{C}P^{n}$. It is known that the geodesic hypersphere is characterized by  the {\it totally $\eta$-umbilic} hypersurface, namely, the shape operator $A$ of $\overline{M}$ satisfies $A(Z)=aZ+b\xi^{\sharp}(Z)\xi$ for any $Z\in \Gamma (T\overline{M})$, where $\xi$ is the characteristic vector field, and $a,b\in \mathbb{R}$. In our setting, $\xi$ is given by a unit length vector field $\tilde{v}$ on $\overline{M}$ generated by the $C(S)$-action. In particular,  we have $A(X')=aX'$ for any $X'\in E_p$ and $p\in \overline{M}$. Therefore, by the theorem of Kobayashi \cite{Kobayashi}, the holomorphic sectional curvature $K_0$ of the quotient space $M_0$ is related to the holomorphic sectional curvature $K$ of $M$ by
\begin{align*}
K_0(X)=K(X')+4g(A(X'), X')^2=4+4a^2
\end{align*}
for any unit vector $X\in T_xM_0$. In particular, we see $M_0$ is a compact K\"ahler manifold of positive constant holomorphic sectional curvature.
Moreover, $M_0$ is simply connected since $M_0$ is positive compact K\"ahler-Einstein. Thus, $M_0$ is holomorphic isometric to $\mathbb{C}P^{n-1}$. 

Therefore, by Corollary \ref{cor3.8}, we see that  {any  $C(S')$-invariant minimal Lagrangian submanifold in $\mathbb{C}P^{n}$ corresponds to a minimal  Lagrangian submanifold in $\mathbb{C}P^{n-1}$ via the fibration $\pi: \overline{M}=\tilde{\mu}^{-1}(0)\rightarrow M_0\simeq \mathbb{C}P^{n-1}$. }
We note that many examples of minimal Lagrangian submanifold in $\mathbb{C}P^{n-1}$ are known (e.g. \cite{BG}), and these examples yield minimal Lagrangian submanifolds in $\mathbb{C}P^{n}$.
}
\end{example}

\begin{example}\label{ex3.11}
{\rm Let $M$ be the oriented real two-plane Grassmannian manifold $SO(2n)/(SO(2)\times SO(2n-2))\simeq \tilde{G}_2(\mathbb{R}^{2n})$. $\tilde{G}_2(\mathbb{R}^{2n})$ can be identified with the complex hyperquadric $Q^{2n-2}(\mathbb{C})$, where $2n-2={\rm dim}_{\mathbb{C}}\tilde{G}_2(\mathbb{R}^{2n})$, and the real hypersurfaces with  isometric Reeb flows in $Q^{2n-2}(\mathbb{C})$ (or more generally, in complex hyperquadrics of arbitrary dimensions) were classified by Berndt and Suh (\cite{BS2}). Such a real hypersurface $\overline{M}$ is obtained by an orbit of the unitary group $U(n)$ as a closed subgroup of $SO(2n)$, and  $\overline{M}\simeq U(n)/(U(1)\times U(n-2))$ (See \cite{BS2} for the details). In particular, the K\"ahler quotient space $M_0$ is diffeomorphic to the homogeneous space $U(n)/S^1\cdot (U(1)\times U(n-2))$, where $S^1$ is the center of $U(n)$. 

Let us consider an oriented real hypersurface $\overline{L}$ in the odd-dimensional unit sphere $S^{2n-1}(1)\subset \mathbb{R}^{2n}$. Define the Gauss map $\mathcal{G}: \overline{L}\rightarrow \tilde{G}_2(\mathbb{R}^{2n})\simeq Q^{2n-2}(\mathbb{C})$ via $p\mapsto V_p$, where $V_p$ is the oriented normal space of $\overline{L}$ in $S^{2n-1}(1)$ regarded as a two-plane in $\mathbb{R}^{2n}$. It is known that $\mathcal{G}$ is a Lagrangian immersion into $Q^{2n-2}(\mathbb{C})$, and hence we obtain a Lagrangian submanifold $L$ as the Gauss image $\mathcal{G}(\overline{L})$.  Moreover, if $\overline{L}$ is an isoparametric hypersurface  in $S^{2n-1}(1)$, then  $L=\mathcal{G}(\overline{L})$ is a minimal Lagrangian submanifold in $Q^{2n-2}(\mathbb{C})$ (see \cite{MO}). Since the action $U(n)\curvearrowright Q^{2n-2}(\mathbb{C})$ is equivariant to the natural action $U(n)\curvearrowright \mathbb{R}^{2n}\simeq \mathbb{C}^n$ via $\mathcal{G}$, 
if $\overline{L}$ is $S^1$-invariant, then so is ${L}$. Therefore, we obtain $S^1$-invariant minimal Lagrangian submanifold  in $Q^{2n-2}(\mathbb{C})$ from a $S^1$-invariant isoparametric hypersurface in $S^{2n-1}(1)$, and such a hypersurface comes from an isoparametric hypersurface in $\mathbb{C}P^{n-1}$ (see \cite{Do}). Moreover, explicit examples of isoparametric hypersurfaces in $\mathbb{C}P^{n-1}$ are found in \cite{BCO} and \cite{Do}.  Thus, by Theorem \ref{thm2.11} and \ref{thm3.7}, we obtain some examples of minimal Lagrangian submanifold in the K\"ahler quotient space $M_0$ from isoparametric hypersurfaces in $\mathbb{C}P^{n-1}$.
 }
\end{example}

{\bf Acknowledgements.}
The author would like to thank J\"urgen Berndt and Yoshihiro Ohnita for some suggestions. He also thanks to Takahiro Hashinaga for helpful discussions and Anna Gori for sharing a result of joint work with Lucio Bedulli about the formula of moment map.  A part of this work was done while the author was staying at King's College London and University of T\"ubingen by the JSPS Program for Advancing Strategic International Networks to Accelerate the Circulation of Talented Researchers, Mathematical Science of Symmetry, Topology and Moduli, Evolution of International Research Network based on OCAMI. He is grateful for the hospitalities of the college and the university.



\begin{thebibliography}{99}
\bibitem{AO} \textsc{A. Amarzaya and Y. Ohnita}, {\em  Hamiltonian stability of certain minimal Lagrangian submanifolds in complex projective spaces}, Tohoku Math. J. {\bf 55} (2003), 583-610.
\bibitem{Audin} \textsc{M. Audin}, {\em Torus actions on symplectic manifolds}. Second revised edition. Progress in Mathematics, 93. Birkh\"auser Verlag, Basel, 2004.
\bibitem{BL} \textsc{L. Bates and E. Lerman}, {\em Proper group actions and symplectic stratified spaces.} Pacific J. Math. {\bf 181} (1997), no. 2, 201--229.
\bibitem{BG} \textsc{L. Bedulli and A. Gori}, {\em Homogeneous Lagrangian submanifolds.} Comm. Anal. Geom. {\bf 16} (2008), no. 3, 591--615.
\bibitem{Behrndt} \textsc{T. Behrndt}, {\em Generalized Lagrangian mean curvature flow in K\"ahler manifolds that are almost Einstein.} Complex and differential geometry, 65--79, Springer Proc. Math., 8, Springer, Heidelberg, 2011.
\bibitem{BCO} \textsc{J. Berndt, S. Console and C. Olmos}, {\em Submanifolds and holonomy.} Second edition. Monographs and Research Notes in Mathematics. CRC Press, Boca Raton, FL, 2016.
\bibitem{BS1} \textsc{J. Berndt and Y.J. Suh}, {\em Real hypersurfaces with isometric Reeb flow in complex two-plane Grassmannians.} Monatsh. Math. {\bf 137} (2002), no. 2, 87--98.
\bibitem{BS2} \textsc{J. Berndt and Y.J. Suh}, {\em Real hypersurfaces with isometric Reeb flow in complex quadrics.} Internat. J. Math. {\bf 24} (2013), no. 7, 1350050, 18 pp.
\bibitem{Besse} \textsc{A. Besse}, {\em Einstein manifolds.} Reprint of the 1987 edition. Classics in Mathematics. Springer-Verlag, Berlin, 2008.
\bibitem{Do} \textsc{M. Dom\'inguez-V\'azquez}, {\em Isoparametric foliations on complex projective spaces.} 
Trans. Amer. Math. Soc. {\bf 368} (2016), no. 2, 1211--1249. 
\bibitem{Dong} \textsc{Y. Dong}, {\em Hamiltonian-minimal Lagrangian submanifolds in Kaehler manifolds with symmetries.} Nonlinear Anal. {\bf 67} (2007), no. 3, 865--882.
\bibitem{Futaki} \textsc{A. Futaki}, {\em The Ricci curvature of symplectic quotients of Fano manifolds.} Tohoku Math. J. (2) {\bf 39} (1987), no. 3, 329--339.
\bibitem{Futaki2} \textsc{A. Futaki}, {\em K\"ahler-Einstein metrics and integral invariants.} Lecture Notes in Mathematics, {\bf 1314}. Springer-Verlag, Berlin, 1988.
\bibitem{GP} \textsc{C. Gorodski and  F. Podest\`a}, {\em Tight Lagrangian homology spheres in compact homogeneous K\"ahler manifolds.} Israel J. Math. {\bf 206} (2015), no. 1, 413--429.
\bibitem{GS} \textsc{V. Guillemin and R. Sjamaar}, {\em Convexity properties of Hamiltonian group actions.} CRM Monograph Series, 26. American Mathematical Society, Providence, RI, 2005.
\bibitem{Hel} \textsc{S. Helgason}, {\em Differential geometry and symmetric spaces}, Academic Press, New York, 1962.
\bibitem{HL} \textsc{W. Hsiang and H. B. Lawson}, {\em Minimal submanifolds of low cohomogeneity.}
J. Differential Geometry {\bf 5} (1971), 1--38. 
\bibitem{KK} \textsc{T. Kajigaya and K. Kunikawa}, {\em Hamiltonian stability for weighted measure and generalized Lagrangian mean curvature flow}, arXiv:1710.05537,  (2017).
\bibitem{Kobayashi} \textsc{S. Kobayashi}, {\em Submersions of CR manifolds}, Tohoku Math. J. (2) {\bf 39} (1987), 95--100. 
\bibitem{LR} \textsc{E. Legendre and Y. Rollin}, {\em Hamiltonian stationary Lagrangian fibrations}, arXiv 1606.05886v1, to appear in Journal of Symplectic Geometry.
\bibitem{LP} \textsc{J. D. Lotay and T. Pacini}, {\em From Lagrangian to totally real geometry: coupled flows and calibrations}, arXiv:1404.4227.
\bibitem{MO} \textsc{H. Ma and Y. Ohnita}, {\em On Lagrangian submanifolds in complex hyperquadrics and isoparametric hypersurfaces in spheres.} Math. Z. {\bf 261} (2009), no. 4, 749--785. 
\bibitem{Okumura} \textsc{M. Okumura}, {\em On some real hypersurfaces of a complex projective space}. Trans. Amer. Math. Soc. {\bf 212} (1975) 355--364.
\bibitem{Oh} \textsc{Y. -G. Oh}, {\em Volume minimization of Lagrangian submanifolds under Hamiltonian deformations}.  Math. Z. {\bf 212} (1993),  175--192.
\bibitem{Pacini} \textsc{T. Pacini}, {\em Mean curvature flow, orbits, moment maps.} Trans. Amer. Math. Soc. {\bf 355} (2003), no. 8, 3343--3357.
\bibitem{Podesta} \textsc{F. Podest\`a}, {\em A note on moment maps and K\"ahler-Einstein manifolds}, Int. J. Geom. Methods Mod. Phys. {\bf 3} (2006), 1215--1219.
\bibitem{SmoWang} \textsc{K. Smoczyk and M.-T. Wang}, {\em Generalized Lagrangian mean curvature flows in symplectic manifolds}. Asian J. Math. {\bf 15} (2011), no. 1, 129--140.
\end{thebibliography}


\end{document}